\newtheorem{theorem}{Theorem}[section]
\newaliascnt{lemma}{theorem}
\newtheorem{lemma}[lemma]{Lemma}
\newaliascnt{proposition}{theorem}
\newtheorem{proposition}[proposition]{Proposition}
\theoremstyle{definition}
\newtheorem*{notation}{Notation}
\newaliascnt{definition}{theorem}
\newtheorem{definition}[definition]{Definition}
\theoremstyle{remark}
\newtheorem{remark}[theorem]{Remark}
\numberwithin{equation}{section}
\newtheorem*{ack}{Acknowledgement}
\newcommand{\OO}{\mathcal{O}} 
\newcommand{\tor}{\mathrm{tor}}
\newcommand{\rsw}{\operatorname{rsw}}
\newcommand{\sw}{\operatorname{Sw}}
\newcommand{\Spec}{\operatorname{Spec}}
\newcommand{\N}{\mathbb{N}}
\newcommand{\Z}{\mathbb{Z}}
\newcommand{\Q}{\mathbb{Q}}
\newcommand{\R}{\mathbb{R}}
\newcommand{\m}{\mathfrak{m}}
\newcommand{\pole}{\mathrm{pole}}
\newcommand{\ab}{\mathrm{ab}}
\begin{document}

\title{Generalized Hasse-Herbrand functions in positive characteristic}

\author{Isabel Leal\thanks{Department of Mathematics, University of Chicago, 5734 S. University Avenue, Chicago, IL, 60637, USA. Electronic address: \texttt{isabel@math.uchicago.edu}}}

\date{}

\maketitle

\begin{abstract}
	Let $L/K$ be an extension of complete discrete valuation fields of positive characteristic, and assume that the residue field of $K$ is perfect. The residue field of $L$ is not assumed to be perfect.

	In this paper, we show that the generalized Hasse-Herbrand function $\psi_{L/K}^\ab$ has properties similar to those of its classical counterpart. In particular, we prove that $\psi_{L/K}^\ab$ is continuous, piecewise linear, increasing, convex, and satisfies certain integrality properties.
\end{abstract}

\section{Introduction}

From classical ramification theory, if $L/K$ is a finite Galois extension of local fields and $G=G(L/K)$ is the Galois group of this extension, we know that there are lower and upper ramification subgroups $G_{t}$ and $G^t$ of $G$, where $t\in [0,\infty)$, related to each other by the classical Hasse-Herbrand $\psi$-function: 
\[
G^t = G_{\psi_{L/K}(t)}.
\]

In \cite{isabel2}, we  considered an extension $L/K$ of complete discrete valuation fields, where the residue field of $K$ is perfect and of positive characteristic, but the residue field of $L$ is possibly \textit{imperfect}.  We defined, for such an extension, a generalized Hasse-Herbrand $\psi$-function $\psi_{L/K}^\ab$, and showed that it coincides with the classical $\psi_{L/K}$ when $L/K$ is a finite Galois extension of local fields.  
 
In this paper, we study these generalized Hasse-Herbrand $\psi$-functions for fields of positive characteristic, and show that they have properties similar to those of the classical $\psi$-function. In particular, we prove that $\psi_{L/K}^{\ab}$ is continuous, piecewise linear, increasing, convex, and satisfies certain integrality properties. 

More precisely, we prove the following theorems:
\begin{theorem}[\autoref{theorem:cont} and \autoref{theorem:piecewiselinear}]
	Let $L/K$ be an extension of complete discrete valuation fields of positive characteristic. Assume that the residue field of $K$ is perfect. Then $\psi_{L/K}^{\ab}:[0,\infty)\to [0,\infty)$ is continuous, piecewise linear, increasing, and convex. 
\end{theorem}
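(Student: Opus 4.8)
The guiding principle is to reproduce, in the imperfect-residue-field setting, the structural description responsible for the four properties in the classical case. There $\psi_{L/K}$ is the inverse of $\varphi_{L/K}(u)=\int_0^{u}[G_0:G_t]^{-1}\,dt$; the integrand is a positive, non-increasing step function, so $\varphi_{L/K}$ is continuous, piecewise linear, increasing and concave, and the inverse of such a function is exactly continuous, piecewise linear, increasing and convex. The plan is therefore to show that $\psi_{L/K}^{\ab}$ admits an analogous description through its slopes, and then to read off the four properties from positivity, monotonicity, and local constancy of these slopes. Throughout I would use that the residue field of $K$ is perfect, so that the ramification of characters on the $K$-side is governed by the classical theory and all of the genuine difficulty is concentrated on the $L$-side, where it is measured by Kato's Swan conductor $\sw$ and refined Swan conductor $\rsw$.

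I would first reduce the \emph{increasing}, \emph{convex}, and \emph{continuous} assertions to two statements about the right-hand slope of $\psi_{L/K}^{\ab}$: that it is everywhere positive and bounded, and that it is non-decreasing in $t$. Positivity gives that $\psi_{L/K}^{\ab}$ is increasing; monotonicity of the slope gives convexity; a convex function is automatically continuous on the open interval $(0,\infty)$; and boundedness of the slope near $0$, together with the normalization $\psi_{L/K}^{\ab}(0)=0$, forces $\psi_{L/K}^{\ab}$ to be linear near the origin and hence continuous there. The slope at $t$ should be interpretable as an index measuring how fast the Swan conductor of a character grows as it is passed from $K$ to $L$ at ramification level $t$; positivity then expresses that ramification cannot decrease, and monotonicity expresses that these indices, like the $[G_0:G_t]$ classically, only grow as the level increases. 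Establishing these two facts is precisely where the comparison between $\sw_K$ and $\sw_L$ enters.

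The substantive step is \autoref{theorem:piecewiselinear}: that the slope is locally constant, changing only at a discrete set of breaks. Here the difficulty special to imperfect residue fields appears, since a character over $L$ is no longer classified by a single integer break --- its $\rsw$ takes values in a twisted module of differentials rather than in $\Z$ --- so the classical step-function description of the slopes is not available. The plan is instead to show that, as $t$ ranges over a bounded interval, only finitely many ramification-theoretic regimes (finitely many possible leading behaviours of $\rsw$) can occur, so that the slope assumes finitely many values and the breaks are isolated. Granting such a finiteness statement, the linear pieces assemble, and matching across each break together with the slope estimates of the previous step upgrades the result to a continuous, piecewise linear, increasing, convex function.

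I expect this finiteness of breaks to be the main obstacle. Concretely, one must prove that the transition of refined Swan conductors from $K$ to $L$ is governed, on each bounded range of the parameter, by only finitely many distinct linear laws, and pin down the breakpoints and slopes sharply enough to see that adjacent pieces meet continuously and bend in the convex direction. This is the point at which the classical argument --- which leans on integer breaks and a finite Galois group --- must be replaced by a direct analysis of the differential-module-valued invariant $\rsw$; once that analysis yields the required discreteness, the remaining bookkeeping, including the integrality of breaks and slopes, should be routine.
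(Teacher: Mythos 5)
There is a genuine gap, and it sits exactly where your plan defers the work. Your entire argument is routed through a ``slope function'' of $\psi_{L/K}^{\ab}$ --- positive, bounded, non-decreasing, locally constant --- but you never construct this object or show it exists. In the generalized setting $L/K$ may be transcendental, there is no Galois group, and $\psi_{L/K}^{\ab}$ is defined as an infimum over all finite tame extensions $K'/K$; such a function has no a priori one-sided derivatives, and the classical description of $\varphi'$ via $[G_0:G_t]^{-1}$ has no counterpart to fall back on. Saying the slope ``should be interpretable as an index measuring how fast the Swan conductor grows'' is not a reduction of the theorem but a restatement of it: positivity, monotonicity, and local constancy of slopes are literally the assertions ``increasing, convex, piecewise linear.'' The two facts your plan rests on --- monotonicity of the slope, and finiteness of the ``ramification-theoretic regimes'' of $\rsw$ on bounded intervals --- are precisely the content to be proved, and you acknowledge leaving both open.

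The missing idea is the paper's reduction to the perfect-residue-field case, \autoref{proposition:supremum}: $\psi_{L/K}^{\ab}=\sup\bigl\{\psi^{\ab}_{M/K}/e(M/L)\colon M\in S\bigr\}$, where $S$ consists of extensions $M\supset L$ with perfect residue field, realized concretely by the fields $\widehat{L_\pi}$ and $\widehat{L_{\lambda,i}}$ of \autoref{lemma:Lpi} and \autoref{lemma:Llambda}, on which the refined Swan conductor controls exactly how $\sw$ changes. Combined with transitivity (\autoref{lemma:transitivity}) and the Cohen-ring construction $L_0=K\otimes_{W(k)}W(l)$ (\autoref{lemma:perfectpsi}), every $\psi^{\ab}_{M/K}$ in this supremum \emph{is} a classical Hasse--Herbrand function, so convexity and equicontinuity can be imported from Serre, yielding \autoref{theorem:cont}. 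Even then, piecewise linearity does not follow formally: a supremum of infinitely many piecewise linear functions need not be piecewise linear (nor is a supremum of continuous functions automatically continuous), so the paper still needs (a) linearity for large $t$ from \autoref{theorem:oldformula}, (b) the integrality statement \autoref{lemma:integrality} to show that near any $t_0$ only finitely many $\lambda$ can achieve the sup, and (c) the stabilization argument that $f_{i+1}$ is linear on the subintervals where $f_i$ is, so that $\psi_{L/K}^{\ab}=\sup_i f_i$ inherits piecewise linearity. Your proposal contains neither this reduction nor any substitute mechanism for the finiteness it requires, so the ``remaining bookkeeping'' you call routine is in fact the proof.
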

\begin{theorem}[\autoref{theorem:integrality}] Let $L/K$ be an extension of complete discrete valuation fields of positive characteristic. Assume that the residue field of $K$ is perfect.	Then: \begin{enumerate}[(i)]
		\item For $t\in \Z_{\geq 0}$, we have $\psi_{L/K}^{\ab}(t)\in\Z_{\geq0}$.	
		\item For  $t\in\Q_{\geq0}$, we have $\psi_{L/K}^{\ab}(t)\in\Q_{\geq0}$. 
		\item The right and left derivatives of $\psi_{L/K}^{\ab}$ are integer-valued.
	\end{enumerate} 
\end{theorem}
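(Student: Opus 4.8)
The plan is to exploit the piecewise-linear structure already established. By the preceding theorem, $\psi_{L/K}^{\ab}$ is continuous, increasing, convex, and piecewise linear, so it is completely determined by the value $\psi_{L/K}^{\ab}(0)$ together with its breakpoints and the slope on each linear piece. Since $\psi_{L/K}^{\ab}(0)=0$ by construction, the three assertions reduce to understanding these slopes and breakpoints: part (iii) is the statement that each slope is a nonnegative integer; part (ii) will follow once the breakpoints are known to be rational; and part (i) is the genuinely arithmetic claim that the values at integer arguments are integers. I would prove (iii) first, then deduce (ii), and treat (i) last as the main point.

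For (iii) I would return to the definition of $\psi_{L/K}^{\ab}$ from \cite{isabel2}, where the function is assembled from the refined Swan conductor. On each linear piece the slope records how the order of the pole of $\rsw$ of a character of $G_K^{\ab}$ transforms under restriction to $G_L^{\ab}$ — equivalently, an index of ramification subgroups in the abelianized Galois group. Because Swan conductors and these pole orders are valued in $\Z_{\geq 0}$, and because the pieces are cut out precisely at the jumps of the filtration, each slope is a nonnegative integer; this mirrors the classical fact that the slopes of $\psi_{L/K}$ are the subgroup indices $[G_0 : G_{\psi(t)}]$. Part (ii) then follows once one checks, again from the conductor-theoretic description, that each breakpoint is rational (a value of $\sw$ normalized by an integer ramification index): a piecewise-linear function with rational breakpoints and integer slopes starting from $0$ takes rational values at every rational argument.

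The essential content, and the step I expect to be the main obstacle, is part (i): that $\psi_{L/K}^{\ab}$ sends integers to integers. This is the Hasse--Arf phenomenon in the imperfect-residue-field setting. Since the breakpoints need not themselves be integers, integrality at integer arguments is a real constraint, forcing the fractional contributions of the individual pieces to cancel. I would establish it by transporting the upper-numbering jumps that define the breakpoints back to the lower numbering, where integrality is built into the construction through $\sw$ and $\rsw$, and then using the abelian reciprocity underlying the definition of $\psi_{L/K}^{\ab}$ to show that at an integer argument the accumulated slope data sum to an integer. Concretely, testing against characters $\chi$ of $G_K^{\ab}$, a failure of integrality at an integer $t$ would produce a character whose restriction to $G_L$ has Swan conductor incompatible with the integrality of $\sw_K(\chi)$, contradicting the duality used to define the filtration. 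This is exactly where the perfectness of the residue field of $K$ and the abelian hypothesis are used, paralleling the failure of Hasse--Arf for non-abelian groups.

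With these in place the assertions assemble directly: (iii) is the integrality of slopes, (ii) comes from rational breakpoints together with integer slopes, and (i) is the Hasse--Arf step. The monotonicity and convexity already proved guarantee that the breakpoints form a discrete increasing sequence, so the inductive bookkeeping along the successive pieces is well founded.
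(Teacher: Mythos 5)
There is a genuine gap, and it sits exactly where you said you expected the argument to be easy. Your proof of (iii) asserts that each slope is an integer because ``Swan conductors and pole orders are valued in $\Z_{\geq 0}$,'' by analogy with the classical indices $[G_0:G_{\psi(t)}]$. But there is no Galois group attached to $L/K$ here, and by \autoref{proposition:supremum} the function $\psi^{\ab}_{L/K}$ is a supremum of functions $\psi^{\ab}_{M/K}/e(M/L)$ over extensions $M$ with perfect residue field; in the proof of \autoref{theorem:piecewiselinear} the relevant terms are $\psi^{\ab}_{\widehat{L_{\lambda,i}}/K}/p^i + 1/p^i$. Each numerator has integer slopes by the classical theory, but after dividing by $e(M/L)=p^i$ the individual slopes are only rational, with denominators growing as $i\to\infty$. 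That the supremum nevertheless has \emph{integer} slopes is precisely the nontrivial content of (iii), and it does not follow from integrality of conductors. The paper proves it by an arithmetic argument: on an interval where $\psi^{\ab}_{L/K}(t)=\psi^{\ab}_{\widehat{L_{\lambda,i}}/K}(t)/p^i+1/p^i$, one picks rational points $t_1=p^im_1/e$ and $t_2=m_2/e$ with $(m_2,p)=1$, passes to a totally tamely ramified $K'/K$ of index $e$, and applies part (i) to $\psi^{\ab}_{L'/K'}$ at the integers $et_1$, $et_2$ to force $p^i$ to divide the slope. In particular (iii) \emph{depends on} (i) in the paper, so your plan to prove (iii) first and (i) last is either circular or requires a genuinely new argument that your sketch does not supply.

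Your treatment of (i) has a second gap: the definition of $\psi^{\ab}_{L/K}$ is not simply $\sup_\chi \sw(\chi_L)$ over $\chi$ with $\sw\chi\leq t$; it involves an infimum over all finite tame extensions $K'/K$ with the normalizations $e(K'/K)t$ and $e(L'/L)s$. Integrality at an integer $t$ therefore requires showing that these tame twists cannot push the value strictly above the integer $m=\inf\{s\in\Z_{\geq 0}: \operatorname{Im}(F_tH^1(K)\to H^1(L))\subset F_sH^1(L)\}$. The paper's \autoref{lemma:integrality} does exactly this, by running the supremum formula of \autoref{proposition:supremum} over each tame twist and comparing the two suprema; your appeal to ``abelian reciprocity'' and ``the duality used to define the filtration'' never engages with the tame base change in the definition, which is where the actual work lies. (Your reduction of (ii) to rational breakpoints plus integer slopes is fine and matches the paper, but it inherits both gaps above.)
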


Furthermore, we compute $\psi_{L/K}^\ab$ for certain extensions $L/K$ of complete discrete valuation fields of characteristic $p>0$. Assume that the residue field of $K$ is perfect.  When $e(L/K)$ is prime to $p$, we show that 
\[\psi_{L/K}^\ab(t)=e(L/K)t,\]
where $e(L/K)$ denotes the ramification index of $L/K$.  

More interestingly, we consider the case where there exists a field $L_0$ such that $L/L_0$ is a separable totally ramified cyclic extension of degree $p$, and $e(L_0/K)=1$.  In \autoref{theorem:degreep} we prove that, in this case,
	\[
	\psi^{\ab}_{L/K}(t) = \begin{cases}
	t, & t \leq \dfrac{\delta_{\tor}(L/K)}{p-1} \\ pt - \delta_\tor(L/K), & t> \dfrac{\delta_{\tor}(L/K)}{p-1}
	\end{cases}
	\]
	where $\delta_\tor(L/K)$ is the length of the torsion part of the completed $\OO_L$-module of relative differential forms with log poles $\hat{\Omega}^1_{\OO_L/\OO_K}(\log)$. This formula is a generalization of the classical formula obtained in \cite[Chapter V, \S3]{serre1979local}.
	
\begin{notation}
	Through this paper, for a complete discrete valuation field $K$, $\OO_K$ denotes its ring of integers, $\m_K$ the maximal ideal,  $\pi_K$ a prime element, and $G_K$ the absolute Galois group. Lowercase $k$  denotes the residue field of $K$, and $v_K$ the discrete valuation.	When we say that $K$ is a local field, we mean that  $K$ is a complete discrete valuation field with perfect (not necessarily finite) residue field.
	
	We write \[\hat{\Omega}^1_{\OO_K}(\log)= \varprojlim\limits_m \Omega^1_{\OO_K}(\log)/\m_K^m \Omega^1_{\OO_K}(\log),\] 
	where   
	\[
	\Omega^1_{\OO_K}(\log)= (\Omega_{\OO_K}^1\oplus(\OO_K\otimes_{\Z}K^\times))/(da - a\otimes a, \,  a \in \OO_K, \, a \neq 0).
	\]
	By completed free $\OO_L$-module with basis $\{e_\lambda\}_{\lambda\in\Lambda}$, we mean $\varprojlim\limits_m M/\m_L^mM$, where $M$ is the free $\OO_L$-module with basis $\{e_\lambda \}_{\lambda\in\Lambda}$.
	
	We shall denote by $P_\tor$ the torsion part of an abelian group $P$. Let $L/K$ an extension of complete discrete valuation fields of positive characteristic $p>0$. Throughout this paper, $e(L/K)$ shall denote the ramification index of $L/K$ and, when $K$ is of characteristic zero, $e_K$ shall denote the absolute ramification index of $K$. When $k$ is perfect and $L/K$ is separable,  $\delta_\tor(L/K)$ shall denote the length of $\left(\dfrac{\hat{\Omega}^1_{\OO_L}(\log)}{\OO_L\otimes_{\OO_K}\hat{\Omega}^1_{\OO_K}(\log)}\right)_\tor$.

	Following the notation in \cite{kato1989swan}, we write, for $A$ a ring over $\Q$ or a smooth ring over a field of characteristic $p>0$, and $n\neq 0$ possibly divisible by $p$, 
	\[ H_n^q(A)=   H^q((\Spec A)_{\mathrm{et}}, \Z/n\Z(q-1)  )\]
	and
	\[H^q(A)= \varinjlim\limits_n   H_n^q(A).\]
\end{notation}
\section{Definition and previous results}

Through  this section, let $L/K$ be an 
extension of  complete discrete valuation fields such that the
residue field of $K$ is perfect and 
of characteristic $p>0$. In \cite{isabel2}, we defined   
generalizations $\psi_{L/K}^{\mathrm{AS}}$ and $\psi_{L/K}^\ab$ of the classical $\psi$-function for this case, in the sense that they both coincide with the classical $\psi_{L/K}$ when $L/K$ is a finite Galois extension of local fields (\cite[Theorem 5.5]{isabel2}).

In this section, we review the definition of $\psi_{L/K}^{\ab}$. For a detailed discussion, we refer to \cite{isabel2}.
Assume first that the residue field $k$ of $K$ is algebraically closed. 
For $t \in \Z_{(p)}$, $t\geq0$, define $\psi^{\mathrm{ab}}_{L/K}(t)\in \R_{\geq 0}$ as 
\begin{align*}
&\psi^{\ab}_{L/K}(t)= \\ &\inf\left\lbrace s\in \Z_{(p)} \, \middle| \, \begin{array}{l@{}l@{}}
\operatorname{Im}(F_{e(K'/K)t}H^1(K') \to H^1(L')) \subset F_{e(L'/L)s}H^1(L')   \\  \text{for all finite, tame extensions $K'/K$ of complete discrete} \\ \text{valuation fields such that }  e(L'/L)s, e(K'/K)t\in \Z  \end{array}  \right\rbrace,
\end{align*}
where $L'=LK'$, and $F_nH^1(K)$ denotes Kato's ramification filtration defined in \cite{kato1989swan}. Extend $\psi^{\ab}_{L/K}$ to $\R_{\geq 0}$ by putting 
\[
\psi^{\ab}_{L/K}(t)=\sup\{\psi^{\ab}_{L/K}(s):s\leq t, s \in \Z_{(p)} \}.
\]
When $k$ is not necessarily algebraically closed, define  $\psi^{\ab}_{L/K}=\psi_{\widehat{LK_{ur}}/\widehat{K_{ur}}}^\ab$.

In a similar way, we can define $\varphi^\ab_{L/K}$, which is shown to be the inverse of $\psi_{L/K}^\ab$ when the latter is bijective (\cite[Proposition 5.1]{isabel2}).  
Further, we have established a formula for $\psi_{L/K}^\ab(t)$ for sufficiently large $t \in \R_{\geq 0}$, which is given by the following theorem: 
\begin{theorem}[{\cite[Theorem 5.4]{isabel2}}] \label{theorem:oldformula}
	Let $L/K$ be a separable extension of complete discrete valuation fields. Assume that $K$ has perfect residue field of characteristic $p>0$. Let $t\in\R_{\geq 0}$ be such that
	\[\begin{cases}
	t\geq \dfrac{2e_K}{p-1}+\dfrac{1}{e(L/K)}+\left\lceil\dfrac{\delta_\tor(L/K)}{e(L/K)}\right\rceil & \text{if $K$ is of characteristic $0$,} \\[.5cm] t> \dfrac{p}{p-1} \dfrac{\delta_\tor(L/K)}{e(L/K)} & \text{if $K$ is of characteristic $p$.}
	\end{cases}\]
	Then 
	\[\psi^{\mathrm{ab}}_{L/K}(t)= e(L/K)t-\delta_{\tor}(L/K). \]
\end{theorem}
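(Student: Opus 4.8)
The plan is to reduce the statement to an explicit computation with Kato's refined Swan conductor. By the definition of $\psi^{\ab}_{L/K}$ recalled above, I would first replace $K$ by $\widehat{K_{ur}}$ and $L$ by $\widehat{LK_{ur}}$, so that the residue field $k$ becomes algebraically closed; this changes neither side of the claimed formula, since $\delta_\tor(L/K)$ is insensitive to unramified base change (flatness preserves the length of the torsion). After this reduction, every finite tame extension $K'/K$ again has perfect residue field, so $\hat{\Omega}^1_{\OO_{K'}}(\log)$ is free of rank one over $\OO_{K'}$ with generator $d\log\pi_{K'}$, and Kato's theory attaches to each $\chi\in H^1(K')$ with $\sw_{K'}(\chi)=n\geq 1$ a nonzero refined Swan conductor $\rsw(\chi)$ in $\m_{K'}^{-n}\hat{\Omega}^1_{\OO_{K'}}(\log)$ modulo $\m_{K'}^{-n+1}\hat{\Omega}^1_{\OO_{K'}}(\log)$, whose order recovers $n$.

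The heart of the argument is a functoriality statement for $\rsw$ under the pullback $H^1(K')\to H^1(L')$, where $L'=LK'$. I would prove that, once $\sw_{K'}(\chi)$ is large enough, the refined Swan conductor of the restriction is the image of $\rsw(\chi)$ under the canonical map $\iota\colon \OO_{L'}\otimes_{\OO_{K'}}\hat{\Omega}^1_{\OO_{K'}}(\log)\to\hat{\Omega}^1_{\OO_{L'}}(\log)$, and that this image is nonzero, so that no unexpected drop of the conductor occurs. Since $L/K$ is separable, $\iota$ is injective, and in equal characteristic the target $\hat{\Omega}^1_{\OO_{L'}}(\log)$ is torsion-free; the cokernel $\Coker(\iota)$ then has torsion subgroup of length $\delta_\tor(L'/K')$ by definition, and the structure theory of modules over the discrete valuation ring $\OO_{L'}$ shows that this length is exactly the $v_{L'}$-order $d$ by which $\iota(d\log\pi_{K'})$ fails to generate $\hat{\Omega}^1_{\OO_{L'}}(\log)$. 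Combining this with $\pi_{K'}=(\mathrm{unit})\,\pi_{L'}^{e(L'/K')}$ yields the key identity
\[
\sw_{L'}(\chi|_{L'}) = e(L'/K')\,\sw_{K'}(\chi) - \delta_\tor(L'/K').
\]

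From here the theorem follows by bookkeeping with the normalizations. A flatness and length computation gives $\delta_\tor(L'/K')=e(L'/L)\,\delta_\tor(L/K)$ for tame $K'/K$, and $e(L'/K')\,e(K'/K)=e(L'/L)\,e(L/K)$. Feeding a character $\chi$ with $\sw_{K'}(\chi)=e(K'/K)t$ into the key identity gives
\[
\sw_{L'}(\chi|_{L'}) = e(L'/L)\bigl(e(L/K)\,t - \delta_\tor(L/K)\bigr),
\]
which furnishes the upper bound $\psi^{\ab}_{L/K}(t)\leq e(L/K)t-\delta_\tor(L/K)$; the matching lower bound comes from the surjectivity of $\rsw$ onto the top graded piece, which lets me choose, for a cofinal family of tame $K'$ making $e(K'/K)t$ a large integer, a character realizing the extremal conductor, so that no smaller $s$ satisfies the containment in the definition.

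The main obstacle is the functoriality of $\rsw$ together with the precise range in which it holds. Proving that $\rsw(\chi|_{L'})=\iota(\rsw(\chi))$ with nonvanishing image requires Kato's explicit description of $\rsw$ via Artin--Schreier--Witt classes and the $d\log$ map, and a careful estimate ensuring the leading term is not killed by the difference between $d$ and $d\log$ nor, in mixed characteristic, by the torsion of $\hat{\Omega}^1_{\OO_{K'}}(\log)$. That torsion is controlled by the different of $K/\Q_p$, which is the source of the term $\tfrac{2e_K}{p-1}$ in the hypotheses; in equal characteristic $\hat{\Omega}^1_{\OO_{L'}}(\log)$ is torsion-free and the identification $d=\delta_\tor(L'/K')$ is clean, so the only remaining constraint is that $e(L'/K')\,\sw_{K'}(\chi)$ exceed the order drop by the Artin--Schreier margin, which is precisely the bound $t>\tfrac{p}{p-1}\tfrac{\delta_\tor(L/K)}{e(L/K)}$.
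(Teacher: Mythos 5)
First, a structural point: the present paper does not prove this statement at all --- it is imported verbatim from \cite{isabel2} (Theorem 5.4 there) and used as a black box --- so there is no in-paper proof to compare against. Judged on its own terms, your strategy is the natural one and matches the toolkit this paper recalls from \cite{isabel2}: your ``functoriality of $\rsw$'' is \autoref{lemma:commdia}, your ``order recovers the conductor'' step is \autoref{prop:besta}(ii), and your identification of $\delta_\tor(L'/K')$ with $v^{\log}_{L'}\bigl(\iota(d\log\pi_{K'})\bigr)$, together with $\delta_\tor(L'/K')=e(L'/L)\delta_\tor(L/K)$ for tame $K'/K$, is correct. In equal characteristic the resulting key identity, applied to a character with $\sw_{K'}\chi=e(K'/K)t$, does deliver the lower bound (modulo the minor point that when $e(K'/K)t$ is divisible by $p$ one must use Witt vectors of length $>1$ to realize that conductor).

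The genuine gap is in the upper bound. To get $\psi^{\ab}_{L/K}(t)\leq s_0:=e(L/K)t-\delta_\tor(L/K)$ you must show $\operatorname{Im}(F_{e(K'/K)t}H^1(K')\to H^1(L'))\subset F_{e(L'/L)s_0}H^1(L')$, and $F_{e(K'/K)t}H^1(K')$ contains \emph{every} character with $\sw_{K'}\chi=n'\leq e(K'/K)t$, not just the top ones. For $n'$ below the threshold $\frac{p}{p-1}\,\delta_\tor(L'/K')/e(L'/K')$ your key identity is simply false --- $e(L'/K')n'-\delta_\tor(L'/K')$ can even be negative --- so ``feeding the character with $\sw_{K'}\chi=e(K'/K)t$ into the key identity'' controls nothing about these characters. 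The repair is a dichotomy: writing $e=e(L'/K')$, $\delta'=\delta_\tor(L'/K')$, either the image of $\rsw\chi$ in $F_{en'}\hat{\Omega}^1_{L'}/F_{\lfloor en'/p\rfloor}\hat{\Omega}^1_{L'}$ is nonzero, in which case \autoref{prop:besta}(ii) gives $\sw\chi_{L'}=en'-\delta'\leq e(L'/L)s_0$; or it vanishes, in which case the injectivity in \autoref{prop:besta}(ii) combined with \autoref{lemma:commdia} forces $\sw\chi_{L'}\leq\lfloor en'/p\rfloor$, and the vanishing itself implies $en'\leq\frac{p}{p-1}\delta'$, whence $\sw\chi_{L'}\leq\frac{\delta'}{p-1}<e(L'/L)s_0$ under the hypothesis. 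This second branch is exactly where the constant $\frac{p}{p-1}$ is consumed: with only the crude bound $\sw\chi_{L'}\leq en'$ for low-conductor characters, you would need the stronger hypothesis $e(L/K)t\geq\frac{2p-1}{p-1}\delta_\tor(L/K)$. Finally, your treatment of the case where $K$ has characteristic $0$ is only a gesture: Kato's filtration and refined Swan conductor are defined by entirely different means there (no Artin--Schreier--Witt description), and the term $\frac{2e_K}{p-1}$ requires a separate argument; as written, the proposal addresses at most the equal-characteristic half of the statement.
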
 

We shall review some concepts that were necessary for obtaining \autoref{theorem:oldformula} and will be used through the rest of this paper.  For more detailed background on Kato's Swan conductor, we recommend \cite{kato1989swan}. For an overview of classical ramification theory and modern advances, we suggest \cite{xiao2015ramification}.

Let $L$ be a complete discrete valuation field of characteristic $p>0$. Let $l$ be the residue field of $L$ and write $L=l((\pi_L))$ for some  prime $\pi_L\in L$. Let $\{b_\lambda\}_{\lambda\in \Lambda}$ be a lift of a  $p$-basis of $l$ to $\OO_L$. Then $\hat{\Omega}^1_{\OO_L}(\log)$ is the completed free $\OO_L$-module with basis $\{db_\lambda, d\log \pi_L:\lambda\in\Lambda\}$.  Write $\hat{\Omega}^1_L=L\otimes_{\OO_L}\hat{\Omega}^1_{\OO_L}(\log)$. 

Denote by $W_s(L)$ the Witt vectors of length $s$. There is a homomorphism $d:W_s(L)\to \hat{\Omega}_{L}^1$ given by
\[a=(a_{s-1},\ldots, a_0)\mapsto  \sum_i a_i^{p^i-1}da_i.\]
\begin{remark} 
	In the literature, the operator $d:W_s(L)\to \hat{\Omega}_{L}^1$ is often denoted by $F^{s-1}d$. 
\end{remark}
Define, for  $\omega \in 
\hat{\Omega}_{L}^1$ and $a \in W_s(L)$, 
\[
v_L^{\log} (\omega) = \sup\left\{n: \omega \in \pi_L^n\otimes_{\OO_L}\hat{\Omega}_{\OO_L}^1(\log)\right\},
\]
and
\[
v_L(a)=-\max_i\{- p^i v_L(a_i)\}=\min_i\{p^i v_L(a_i)\}.
\]
Then introduce filtrations of   $\hat{\Omega}_{L}^1$ and $W_s(L)$ by the subgroups 
\[
F_n \hat{\Omega}_{L}^1 = \{\omega \in \hat{\Omega}_{L}^1:v_L^{\log} (\omega) \geq -n\}
\]
and
\[
F_n W_s(L) = \{a \in W_s(L):v_L(a) \geq -n\},
\]
respectively, where $n\in \Z_{\geq0}$. The latter filtration was defined by Brylinski in \cite{Brylinski1983}. 

By the theory of Artin-Schreier-Witt, there are isomorphisms
\[
W_s(L)/(F-1)W_s(L)\simeq H^1(L,\Z/p^s \Z),
\] 
where $F$ is the endomorphism of Frobenius. Kato defined in \cite{kato1989swan} the filtration $F_n H^1(L, \Z / p^s\Z)$
as the image of $F_n W_s(L)$ under this map. We recall that, for $\chi \in  H^1(L, \Z / p^s\Z)$, the Swan conductor $\sw \chi$ is the smallest $n$ such that $\chi \in F_n H^1(L, \Z / p^s\Z)$. 
\begin{definition} \label{def best}
	Let	$a \in W_s(L)$, and $n$ be the smallest non-negative integer such that $a \in F_n W_s(L)$. 
	We say that $a$ is best if there is no $a'\in W_s(L)$ mapping to the same element as $a$ in $H^1(L, \Z / p^s\Z)$  such that $a'\in F_{n'}W_s(L)$ for some  non-negative integer $n'<n$.
\end{definition}
When $v_L(a)\geq0$, $a$ is clearly best. When $v_L(a)<0$, $a$ is best if and only if  there are no $a', b \in W_s(L)$ satisfying
\[
a= a' + (F-1)b
\]	
and  $v_L(a)< v_L(a')$.

Observe that  $a\in F_n W_s(L)\backslash F_{n-1} W_s(L)$ is best if and only if $n= \sw \chi$, where $\chi$ is the image of $a$ under $F_nW_s(L)\to H^1(L,\Z/p^s\Z)$. We remark that ``best $a$'' is not unique.

We review the refined Swan conductor defined in \cite{isabel2}, which shall be necessary for our proofs:
\begin{proposition}[{\cite[Proposition 2.8]{isabel2}}] 	\label{prop:besta}	Let $L$ be a complete discrete valuation field of characteristic $p>0$.  
	\begin{enumerate}[(i)] 
		\item There is a unique homomorphism
		\[
		\rsw: F_n H^1(L, \Z / p^s \Z) \to F_n\hat{\Omega}^1_L/F_{\lfloor n/p\rfloor}\hat{\Omega}^1_L,
		\]
		called refined Swan conductor, such that the composition 
		\begin{center}
			\begin{tikzcd}
				F_n W_s(L) \arrow[r] & F_n H^1(L, \Z / p^s \Z) \arrow[r] &   F_n\hat{\Omega}^1_L/F_{\lfloor n/p\rfloor}\hat{\Omega}^1_L
			\end{tikzcd}
		\end{center}
		coincides with
		\[
		d:  F_n W_s(L)  \to  F_n\hat{\Omega}^1_L/F_{\lfloor n/p\rfloor}\hat{\Omega}^1_L.
		\]
		\item For $\lfloor n/p \rfloor \leq m \leq n$, the induced map 
		\[
		\rsw:  F_n H^1(L, \Z / p^s \Z)/ F_m H^1(L, \Z / p^s \Z) \to F_n\hat{\Omega}^1_L/F_m\hat{\Omega}^1_L
		\]
		is injective. 
		
		In particular, for $\chi \in  F_n H^1(L, \Z / p^s \Z)$, if $\rsw\chi \in F_n\hat{\Omega}^1_L/F_m\hat{\Omega}^1_L$ is non-trivial and the class of $\omega \in F_n\hat{\Omega}^1_L$, then $\sw \chi =  -v_L^\log(\omega)$. 
	\end{enumerate}
\end{proposition}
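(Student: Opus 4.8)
The plan is to define $\rsw$ by descending the additive map $d\colon W_s(L)\to\hat{\Omega}^1_L$ to the Artin--Schreier--Witt quotient $H^1(L,\Z/p^s\Z)\simeq W_s(L)/(F-1)W_s(L)$. Concretely, for $\chi\in F_n H^1(L,\Z/p^s\Z)$ I would choose any lift $a\in F_n W_s(L)$ and declare $\rsw(\chi)$ to be the class of $d(a)$ in $F_n\hat{\Omega}^1_L/F_{\lfloor n/p\rfloor}\hat{\Omega}^1_L$. Uniqueness is then automatic from the surjectivity of $F_n W_s(L)\to F_n H^1$ together with the requirement that the composition agree with $d$, so the content of (i) is well-definedness.

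For well-definedness I need three ingredients. First, $d$ respects the filtrations, $d(F_n W_s(L))\subseteq F_n\hat{\Omega}^1_L$: this follows from the term-by-term estimate $v_L^{\log}(a_i^{p^i-1}da_i)\ge p^i v_L(a_i)\ge v_L(a)$, using $v_L^{\log}(da_i)\ge v_L(a_i)$. Second, in characteristic $p$ one has $d\circ F=0$, since $d(a_i^p)=p\,a_i^{p-1}da_i=0$; as $d$ is additive (the classical additivity of $F^{s-1}d$), this yields $d((F-1)b)=-d(b)$. Third, if $a,a'\in F_n W_s(L)$ have the same image in $H^1$, write $a-a'=(F-1)b$; since $v_L(Fb)=p\,v_L(b)$, the membership $a-a'\in F_n W_s(L)$ forces $v_L(b)\ge -\lfloor n/p\rfloor$, that is $b\in F_{\lfloor n/p\rfloor}W_s(L)$. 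Combining the three ingredients, $d(a)-d(a')=-d(b)\in F_{\lfloor n/p\rfloor}\hat{\Omega}^1_L$, so the class of $d(a)$ is independent of the lift, which proves (i).

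For the injectivity in (ii) I would argue contrapositively and reduce everything to the following core claim: if $a\in W_s(L)$ is best with $v_L(a)=-N<0$, then $v_L^{\log}(d(a))=-N$, i.e.\ the leading term of $d(a)$ does not vanish. Granting this, suppose $\chi\in F_n H^1$ with $\rsw(\chi)$ trivial in $F_n\hat{\Omega}^1_L/F_m\hat{\Omega}^1_L$, and let $N=\sw(\chi)$ with best lift $a$, so $v_L(a)=-N$ and $d(a)$ represents $\rsw(\chi)$. If $N>m$, the core claim gives $v_L^{\log}(d(a))=-N<-m$, so $d(a)\notin F_m\hat{\Omega}^1_L$ and $\rsw(\chi)\ne 0$ in $F_n/F_m$, a contradiction; hence $N\le m$ and $\chi\in F_m H^1$. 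The stated consequence $\sw\chi=-v_L^{\log}(\omega)$ is then immediate: for nontrivial $\rsw\chi$ represented by $\omega$ one has $\omega\equiv d(a)\pmod{F_m\hat{\Omega}^1_L}$ with $v_L^{\log}(d(a))=-N$, and $N>m$ forces $v_L^{\log}(\omega)=-N$.

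The main obstacle is this core claim, which I would prove by contraposition: assuming $v_L^{\log}(d(a))>-N$, I aim to produce $b\in W_s(L)$ with $v_L\big(a-(F-1)b\big)>v_L(a)$, contradicting bestness. The point is that cancellation of the top-valuation part of $d(a)=\sum_i a_i^{p^i-1}da_i$ is very restrictive: it forces the leading part of $a$ to be, modulo terms of strictly larger valuation, a $p$-th power in $W_s(L)$ (which in particular requires $p\mid N$), and by $d\circ F=0$ together with Artin--Schreier--Witt such a $p$-th power can be absorbed into a coboundary $(F-1)b$ that strictly raises the valuation. Carrying out this leading-term analysis---tracking the Witt-vector coordinates and the log-differential valuations $v_L^{\log}(da_i)$, and controlling the possible cancellations among the different indices $i$---is the technical heart of the argument.
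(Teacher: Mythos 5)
A preliminary remark: the paper does not actually prove this proposition --- it is imported verbatim from \cite[Proposition 2.8]{isabel2} --- so there is no in-paper argument to compare yours against; I can only judge the proposal on its own merits. On those merits, your part (i) is correct and complete: defining $\rsw\chi$ as the class of $d(a)$ for any lift $a\in F_nW_s(L)$, your three ingredients (filtration compatibility of $d$, the identity $d\circ F=0$, and the estimate showing that $(F-1)b\in F_nW_s(L)$ forces $b\in F_{\lfloor n/p\rfloor}W_s(L)$) are exactly what is needed, and the ultrametric argument for the third ingredient is sound once one invokes Brylinski's result that the $F_nW_s(L)$ are subgroups. Your reduction of part (ii) to the core claim --- if $a$ is best with $v_L(a)=-N<0$ then $v_L^{\log}(d(a))=-N$ --- is also correct.

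The genuine gap is that this core claim is where essentially all of the content of (ii) lives, and you do not prove it: you state a strategy and explicitly defer ``the technical heart.'' Moreover, the sketch glosses over the two places where the argument is actually hard. First, vanishing of the leading coefficient of $d(a)=\sum_i a_i^{p^i-1}da_i$, which has the shape $\sum_{i\in I}\bigl(c_i^{p^i-1}dc_i+m_ic_i^{p^i}\,d\log\pi_L\bigr)$ with $c_i$ the leading coefficient of $a_i$, $m_i=v_L(a_i)$, and $I$ the set of indices achieving $p^im_i=-N$, does not obviously force the leading part of $a$ to be a $p$-th power: the terms $c_i^{p^i-1}dc_i$ for different $i$ can interact, and one needs the Cartier operator, applied iteratively to $\sum_{i\in I}c_i^{p^i-1}dc_i=0$ (each summand is closed, and $C(c^{p^i-1}dc)=c^{p^{i-1}-1}dc$), to conclude $dc_i=0$, i.e.\ $c_i\in l^p$, for every $i$; separately, the $d\log\pi_L$-coefficient must be analyzed (only the single index $i=v_p(N)$ can contribute a nonzero term there, so cancellation forces that index out of $I$, whence $p\mid m_i$ for all $i\in I$). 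Second, and more seriously, ``absorbing the $p$-th power into a coboundary $(F-1)b$'' is not a one-step operation: Witt-vector addition has carries, so after subtracting $(F-1)b$ with $F(b)$ matching the leading part componentwise, the addition polynomials reintroduce terms of valuation exactly $-N$ in the lower-weight components (for $p=2$, $s=2$ the carry is a product of the two weight-$2$ entries, each of valuation about $-N/2$); one must check that these new obstructions are again of the required $p$-th-power form and iterate, using completeness of $L$ to make the modification converge to an actual $a'=a-(F-1)b$ of strictly larger valuation. Since neither point is carried out, the proposal as written establishes (i) but leaves (ii) unproved.
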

\begin{remark}
		Our refined Swan conductor $\rsw$ is a refinement of the refined Swan conductor defined by Kato in \cite[\S 5] {kato1989swan}. Related results were obtained by  Yatagawa in \cite{yatagawa2016equality}, where the author compares the non-logarithmic filtrations of Matsuda (\cite{matsuda1997swan}) and Abbes-Saito (\cite{abbessaito}) in positive characteristic.
\end{remark}

\section{Fundamental properties}\label{sec:psitrans}

In this section, we prove new properties of $\psi_{L/K}^\ab$ in the positive characteristic case. The central idea is reduce the understanding of $\psi_{L/K}^\ab$ to the case of complete discrete valuation fields with perfect residue fields. This is achieved by studying the behavior of the Swan conductor after taking certain extensions of $L$ with perfect residue field.

\begin{remark}The author would like to thank Kazuya Kato for pointing out that ramification and reduction to the perfect residue field case were also studied by  Borger in \cite{Borger2004}. 
\end{remark} We start with the following lemmas:
\begin{lemma} \label{lemma:commdia}
	Let $L'/L$ be an extension of  complete discrete valuation fields of characteristic $p>0$, and write $e=e(L'/L)$. We have the following commutative diagram: 
	\begin{center}
		\begin{tikzcd}
			F_n H^1(L, \Z / p^s \Z)/ F_{\lfloor\frac{n}{p}\rfloor} H^1(L, \Z / p^s \Z)  \arrow[r]\arrow[d]&  F_n\hat{\Omega}^1_L/ F_{\lfloor\frac{n}{p}\rfloor}\hat{\Omega}^1_L \arrow[d]\\ F_{en} H^1(L', \Z / p^s \Z)/ F_{\lfloor\frac{en}{p}\rfloor} H^1(L', \Z / p^s \Z)  \arrow[r] &  F_{en}\hat{\Omega}^1_{L'}/F_{\lfloor\frac{en}{p}\rfloor}\hat{\Omega}^1_{L'}
		\end{tikzcd}
	\end{center}
\end{lemma}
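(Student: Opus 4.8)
The plan is to deduce this commutativity from the functoriality of the operator $d\colon W_s(L)\to\hat{\Omega}^1_L$ together with the characterization of $\rsw$ in \autoref{prop:besta}(i). The two vertical arrows are the restriction map $H^1(L,\Z/p^s\Z)\to H^1(L',\Z/p^s\Z)$ on the left and the base-change map $\hat{\Omega}^1_L\to\hat{\Omega}^1_{L'}$ on the right, both passed to the indicated quotients, so the first thing I would establish is that these are well-defined, i.e.\ that the filtrations are compatible with the factor $e$. At the level of Witt vectors, for $a=(a_{s-1},\dots,a_0)\in W_s(L)$ one has $v_{L'}(a)=\min_i\{p^i v_{L'}(a_i)\}=e\min_i\{p^i v_L(a_i)\}=e\,v_L(a)$, since the valuation of $L'$ restricts to $e\,v_L$ on $L$; hence $F_nW_s(L)$ maps into $F_{en}W_s(L')$. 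For the log differentials, the key point is that the canonical map $\hat{\Omega}^1_{\OO_L}(\log)\to\hat{\Omega}^1_{\OO_{L'}}(\log)$ sends integral log forms to integral log forms: writing $\pi_L=u\pi_{L'}^e$ with $u\in\OO_{L'}^\times$ gives $d\log\pi_L=d\log u+e\,d\log\pi_{L'}$, which is integral, and each $db_\lambda$ is integral as well. Combining this with $\pi_L^n=u^n\pi_{L'}^{en}$ shows $F_n\hat{\Omega}^1_L$ maps into $F_{en}\hat{\Omega}^1_{L'}$. Finally, from $\lfloor n/p\rfloor\le n/p$ we get $e\lfloor n/p\rfloor\le\lfloor en/p\rfloor$, so the subquotients by the $F_{\lfloor\cdot/p\rfloor}$-pieces are respected and both vertical maps descend.

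Next I would check commutativity at the level of Witt vectors, i.e.\ that the square
\begin{center}
\begin{tikzcd}
F_n W_s(L) \arrow[r,"d"]\arrow[d] & F_n\hat{\Omega}^1_L/F_{\lfloor n/p\rfloor}\hat{\Omega}^1_L \arrow[d]\\
F_{en}W_s(L') \arrow[r,"d"] & F_{en}\hat{\Omega}^1_{L'}/F_{\lfloor en/p\rfloor}\hat{\Omega}^1_{L'}
\end{tikzcd}
\end{center}
commutes, where the left arrow is induced by $\OO_L\hookrightarrow\OO_{L'}$. This is immediate from the formula $d(a)=\sum_i a_i^{p^i-1}da_i$: the inclusion carries $a_i\mapsto a_i$ and $da_i$ to its image in $\hat{\Omega}^1_{L'}$, so $d$ is functorial by inspection.

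To pass from the Witt-vector square to the stated square, I would use that, by Kato's definition, $F_nH^1(L,\Z/p^s\Z)$ is the image of $F_nW_s(L)$ under the Artin-Schreier-Witt map, so that map is surjective, and that the restriction map on $H^1$ is the one induced by $W_s(L)\hookrightarrow W_s(L')$. Given a class in $F_nH^1(L,\Z/p^s\Z)/F_{\lfloor n/p\rfloor}H^1(L,\Z/p^s\Z)$, I would lift it to some $a\in F_nW_s(L)$. By \autoref{prop:besta}(i) its image under the top $\rsw$ is the class of $d(a)$, and pushing down the right arrow gives the class of the image of $d(a)$ in $\hat{\Omega}^1_{L'}$; on the other hand its restriction is represented by the same $a$ viewed in $F_{en}W_s(L')$, and the bottom $\rsw$ sends it to the class of $d(a)$ computed in $\hat{\Omega}^1_{L'}$. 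These agree by the Witt-vector square, giving the claim. The hard part will be the verification for the log-differential filtration: one must ensure that the base-change map on $\hat{\Omega}^1(\log)$ is genuinely integral, i.e.\ that it controls $v^{\log}$ correctly under the factor $e$, allowing for the possibility that the residue extension is inseparable and the chosen $p$-basis changes; all of these subtleties are concentrated in the behavior of $d\log\pi_L$ recorded above.
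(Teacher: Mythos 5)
Your proof is correct and follows essentially the same route as the paper: the paper's entire proof consists of observing that the vertical arrows are well-defined, which follows from $e\lfloor n/p\rfloor\leq\lfloor en/p\rfloor$, exactly the key inequality you isolate. The remaining steps you spell out (integrality of the base-change map on log differentials, functoriality of $d$ on Witt vectors, and descent to $H^1$ via surjectivity of $F_nW_s(L)\to F_nH^1(L,\Z/p^s\Z)$ and \autoref{prop:besta}(i)) are precisely what the paper treats as implicit, so your write-up is a faithful, more detailed version of the same argument.
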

\begin{proof}
	The central point is to observe that the vertical arrows are well-defined, which follows from $e\lfloor\frac{n}{p}\rfloor\leq \lfloor\frac{en}{p}\rfloor$.
\end{proof}
\begin{lemma} \label{lemma:Lpi}
	Let $L$ be a complete discrete valuation field of characteristic $p>0$,  $\{T_\lambda : \lambda \in \Lambda\}$ be a lift of a $p$-basis of $l$ to $\OO_L$, and  \[L_\pi = \bigcup_{\lambda \in \Lambda} \bigcup_{j\in \N}L\left(T_\lambda^{1/p^j}\right).\] 
	Let $\chi \in 	F_n H^1(L, \Z / p^s \Z)\setminus F_{n-1} H^1(L, \Z / p^s \Z)$, and write
	\[
		\rsw \chi = a d\log \pi_L + \sum_\lambda b_\lambda dT_\lambda.
	\] 
	Assume that $-v_L(a)= n>0$. Then
	\[
		\sw \chi_{\widehat{L_\pi}} = \sw \chi, 
	\] 
	where $\chi_{\widehat{L_\pi}}$ is the image of $\chi$ in $H^1(\widehat{L_\pi}, \Z / p^s \Z)$.
\end{lemma}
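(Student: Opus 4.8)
The plan is to exploit the fact that, after adjoining all $p$-power roots of the $T_\lambda$, the residue field becomes perfect while the valuation and the prime $\pi_L$ are left unchanged. First I would record the basic structure of $\widehat{L_\pi}/L$: since each $T_\lambda$ is a unit, adjoining its $p$-power roots does not enlarge the value group, so $e(\widehat{L_\pi}/L)=1$, the element $\pi_L$ remains a prime of $\widehat{L_\pi}$, and $v_{\widehat{L_\pi}}$ restricts to $v_L$. Moreover the residue field of $\widehat{L_\pi}$ is the perfect closure of $l$ (because the $T_\lambda$ reduce to a $p$-basis of $l$), so its $p$-basis is empty and $\hat{\Omega}^1_{\OO_{\widehat{L_\pi}}}(\log)$ is the completed free $\OO_{\widehat{L_\pi}}$-module of rank one on $d\log \pi_L$.

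Next I would transport $\rsw\chi$ along the inclusion $L\hookrightarrow\widehat{L_\pi}$. Applying \autoref{lemma:commdia} with $e=e(\widehat{L_\pi}/L)=1$, the image $\chi_{\widehat{L_\pi}}$ lies in $F_n H^1(\widehat{L_\pi},\Z/p^s\Z)$ and $\rsw\chi_{\widehat{L_\pi}}$ is the image of $\rsw\chi$ under the natural map $F_n\hat{\Omega}^1_L/F_{\lfloor n/p\rfloor}\hat{\Omega}^1_L \to F_n\hat{\Omega}^1_{\widehat{L_\pi}}/F_{\lfloor n/p\rfloor}\hat{\Omega}^1_{\widehat{L_\pi}}$. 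The key computation is that every term $b_\lambda\, dT_\lambda$ dies in $\hat{\Omega}^1_{\widehat{L_\pi}}$: since $T_\lambda^{1/p}\in\OO_{\widehat{L_\pi}}$ we have $T_\lambda=(T_\lambda^{1/p})^p$, hence $dT_\lambda = p(T_\lambda^{1/p})^{p-1}d(T_\lambda^{1/p})=0$ in characteristic $p$. Therefore the image of $\rsw\chi$ is simply $a\, d\log\pi_L$.

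It then remains to read off the Swan conductor. Because $d\log\pi_L$ generates the free rank-one module $\hat{\Omega}^1_{\OO_{\widehat{L_\pi}}}(\log)$ and $v_{\widehat{L_\pi}}(a)=v_L(a)=-n$, we get $v_{\widehat{L_\pi}}^{\log}(a\, d\log\pi_L)=-n$; since $n>0$ forces $\lfloor n/p\rfloor<n$, the class of $a\, d\log\pi_L$ is non-zero in $F_n\hat{\Omega}^1_{\widehat{L_\pi}}/F_{\lfloor n/p\rfloor}\hat{\Omega}^1_{\widehat{L_\pi}}$. Thus $\rsw\chi_{\widehat{L_\pi}}$ is non-trivial and represented by $\omega=a\, d\log\pi_L$, and \autoref{prop:besta}(ii) yields $\sw\chi_{\widehat{L_\pi}}=-v_{\widehat{L_\pi}}^{\log}(\omega)=n=\sw\chi$, as desired.

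The main obstacle I anticipate is the functoriality underlying the second step: verifying cleanly that the base-change map sends $\chi$ into $F_nH^1(\widehat{L_\pi})$ and no deeper, and that $\rsw$ is compatible with this base change, including across the completion $L_\pi\hookrightarrow\widehat{L_\pi}$. Once \autoref{lemma:commdia} is in hand this is largely bookkeeping, but it is precisely where the hypothesis $-v_L(a)=n$ is essential: it guarantees that the part of $\rsw\chi$ surviving the base change, namely $a\, d\log\pi_L$, still realizes the full Swan conductor rather than being dominated by the $dT_\lambda$ terms that vanish over $\widehat{L_\pi}$.
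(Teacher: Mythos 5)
Your proposal is correct and follows essentially the same route as the paper: apply \autoref{lemma:commdia} with $L'=\widehat{L_\pi}$, observe that the $dT_\lambda$ terms vanish so that $\rsw\chi_{\widehat{L_\pi}}=a\,d\log\pi_L$ is non-trivial in $F_n\hat{\Omega}^1_{\widehat{L_\pi}}/F_{\lfloor n/p\rfloor}\hat{\Omega}^1_{\widehat{L_\pi}}$, and conclude by \autoref{prop:besta}(ii). The only difference is that you spell out details the paper leaves implicit (that $e(\widehat{L_\pi}/L)=1$, and that $dT_\lambda=0$ because $T_\lambda$ becomes a $p$-th power), which is fine.
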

\begin{proof}
	Taking $L'= \widehat{L_\pi}$, we have a commutative diagram 
	given by \autoref{lemma:commdia}. Further, by the assumption, the image of the refined Swan conductor $\rsw\chi$ in $F_{n}\hat{\Omega}^1_{\widehat{L_\pi}}/F_{\lfloor\frac{n}{p}\rfloor}\hat{\Omega}^1_{\widehat{L_\pi}}$ is non-trivial. More precisely, the image is 
	\[
		\rsw \chi_{\widehat{L_\pi}} = a d\log \pi_{L}= a d\log \pi_{\widehat{L_\pi}}.
	\] 
	Thus the result follows from \autoref{prop:besta} (ii). 
\end{proof}
\begin{lemma} \label{lemma:Llambda}
	Let $L$ be a complete discrete valuation field of characteristic $p>0$, $m_i = p^i$ for $i\in\Z_{\geq 2}$, and $\{T_\lambda : \lambda \in \Lambda\}$ be a lift of a $p$-basis of $l$ to $\OO_L$. For $\lambda \in \Lambda$, write 
	\[
	L_{\lambda,i} = \left( \bigcup_{\gamma \in \Lambda \setminus \{\lambda\}} \bigcup_{j\in \N}L\left(T_\gamma^{1/p^j}\right) \right)\cup \left(\bigcup_{j\in N} L\left(\pi_L^{1/m_i}\right)\left(\left(T_\lambda-\pi_L^{1/m_i}\right)^{1/p^j}\right)\right).
	\] 
	Let $\chi \in 	F_n H^1(L, \Z / p^s \Z)\setminus F_{n-1} H^1(L, \Z / p^s \Z)$, and write
	\[
	\rsw \chi = a d\log \pi_L + \sum_\gamma b_\gamma dT_\gamma.
	\] 
	Assume that $-v_L(b_\lambda)= n>0$. Then
	\[
	\sw \chi_{\widehat{L_{\lambda,i}}} = m_i \sw \chi_L - 1,
	\] 
	where $\chi_{\widehat{L_{\lambda,i}}}$ is the image of $\chi$ in $H^1(\widehat{L_{\lambda,i}}, \Z / p^s \Z)$.
\end{lemma}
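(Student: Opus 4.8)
The plan is to mimic the proof of \autoref{lemma:Lpi}: compute the image of $\rsw\chi$ under the pullback $\hat{\Omega}^1_L \to \hat{\Omega}^1_{L'}$, where $L' = \widehat{L_{\lambda,i}}$, and then read off $\sw\chi_{L'}$ from \autoref{prop:besta}(ii). First I would record the basic structure of the extension. Since $L_{\lambda,i}$ is obtained from $L$ by adjoining $\pi_L^{1/m_i}$ together with $p$-power roots of units only, the value group of $L'$ is $\tfrac{1}{m_i}\Z$, so $e(L'/L)=m_i$ and $\pi_{L'}:=\pi_L^{1/m_i}$ is a uniformizer; moreover every element of the chosen $p$-basis of $l$ acquires all its $p$-power roots in the residue field of $L'$ (for $\gamma\neq\lambda$ directly, and for $\lambda$ because $T_\lambda-\pi_{L'}\equiv T_\lambda$ in the residue field), so $L'$ has perfect residue field. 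Consequently $\hat{\Omega}^1_{\OO_{L'}}(\log)$ is completed free of rank one on $d\log\pi_{L'}$, and \autoref{lemma:commdia} with $e=m_i$ identifies $\rsw\chi_{L'}$ with the image of $\rsw\chi$ under $F_n\hat{\Omega}^1_L/F_{\lfloor n/p\rfloor}\hat{\Omega}^1_L \to F_{m_in}\hat{\Omega}^1_{L'}/F_{\lfloor m_in/p\rfloor}\hat{\Omega}^1_{L'}$.

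The heart of the argument is the image of each basis differential. Because $\pi_L=\pi_{L'}^{m_i}=\pi_{L'}^{p^i}$ is a $p$-th power, $d\log\pi_L=m_i\,d\log\pi_{L'}=0$ in characteristic $p$; because $T_\gamma^{1/p}\in L'$ for $\gamma\neq\lambda$, we get $dT_\gamma=0$; and because $T_\lambda-\pi_{L'}$ has all its $p$-power roots in $L'$, we get $dT_\lambda=d\bigl(\pi_{L'}+(T_\lambda-\pi_{L'})\bigr)=d\pi_{L'}=\pi_{L'}\,d\log\pi_{L'}$. Substituting into $\rsw\chi=a\,d\log\pi_L+\sum_\gamma b_\gamma\,dT_\gamma$, all terms vanish except the one in the $dT_\lambda$ direction, regardless of the valuations of $a$ and the $b_\gamma$, so the image is $\omega:=b_\lambda\pi_{L'}\,d\log\pi_{L'}$.

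It then remains to compute a valuation and invoke injectivity. Since $v_{L'}(b_\lambda)=m_i\,v_L(b_\lambda)=-m_in$, we have $v_{L'}^{\log}(\omega)=-(m_in-1)$, so $\omega\in F_{m_in-1}\hat{\Omega}^1_{L'}\subseteq F_{m_in}\hat{\Omega}^1_{L'}$. For $i\geq 2$ one has $m_in-1 > p^{i-1}n=\lfloor m_in/p\rfloor$, so $\omega$ is nonzero in the quotient by $F_{\lfloor m_in/p\rfloor}\hat{\Omega}^1_{L'}$; this is exactly where the hypothesis $i\geq 2$ enters. As $\chi\in F_nH^1(L,\Z/p^s\Z)$ maps into $F_{m_in}H^1(L',\Z/p^s\Z)$ and $\rsw\chi_{L'}$ is the nonzero class of $\omega$, \autoref{prop:besta}(ii) yields $\sw\chi_{L'}=-v_{L'}^{\log}(\omega)=m_in-1=m_i\sw\chi_L-1$.

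I expect the main obstacle to be the differential computation of the second paragraph, and in particular the clean verification that $d\log\pi_L$ dies in $\hat{\Omega}^1_{L'}$: this is the mechanism by which the $dT_\lambda$ component becomes the dominant (and, after the rank drops to one, the only) contribution, and it is what forces the ``$-1$'' in the conclusion. The supporting structural claims about $L'$ (that $e(L'/L)=m_i$, that the residue field is perfect, and hence that the log differentials have rank one) are routine but must be pinned down carefully, since the entire computation is carried out in the rank-one module $\hat{\Omega}^1_{L'}$.
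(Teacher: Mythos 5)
Your proposal is correct and follows essentially the same route as the paper's proof: pull back $\rsw\chi$ via \autoref{lemma:commdia}, observe that $d\log\pi_L$ and the $dT_\gamma$ ($\gamma\neq\lambda$) die in $\hat{\Omega}^1_{\widehat{L_{\lambda,i}}}$ while $dT_\lambda$ becomes $\pi_{L'}\,d\log\pi_{L'}$, check that $m_in-1>\lfloor m_in/p\rfloor$, and conclude by \autoref{prop:besta}(ii). The only cosmetic difference is that you make explicit the structural facts (perfect residue field, rank-one log differentials) that the paper leaves implicit.
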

\begin{proof}
	The central idea is similar to that of \autoref{lemma:Lpi}. 
	Taking $L'= \widehat{L_{\lambda,i}}$, we have a commutative diagram given by \autoref{lemma:commdia},
	 and, by the assumption, the image of the refined Swan conductor $\rsw\chi$ in $F_{m_i n}\hat{\Omega}^1_{\widehat{L_{\lambda,i}}}/F_{\lfloor\frac{m_i n}{p}\rfloor}\hat{\Omega}^1_{\widehat{L_{\lambda,i}}}$ is non-trivial. To see that, let $\omega \in F_n\hat{\Omega}^1_L$ be given by
	\[\omega = a d\log \pi_L + \sum_\gamma b_\lambda dT_\gamma. \]
	Then the image $\omega'$ of $\omega$ in $\hat{\Omega}^1_{\widehat{L_{\lambda,i}}}$ is $a d\log \pi_L + b_\lambda dT_\lambda$. Further, in $\hat{\Omega}^1_{\widehat{L_{\lambda,i}}}$, \[dT_\lambda = d \pi_L^{1/m_i},\] so we can write 
	\[
	\omega' = a \dfrac{d \left((\pi_L^{1/m_i})^{m_i}\right)}{(\pi_L^{1/m_i})^{m_i}} + b_\lambda \pi_L^{1/m_i} \dfrac{d \pi_L^{1/m_i}}{\pi_L^{1/m_K}} = b_\lambda \pi_L^{1/m_i} d\log(\pi_L^{1/m_i}).
	\] 
	
	Since $n= - v_L(b_\lambda)\geq -v_L(b_\gamma)$ for $\gamma \in \Lambda$ and $n \geq -v_L(a)$, we have 
	\[
		-v_{\widehat{L_{\lambda,i}}}^\log(\omega')= -v_{\widehat{L_{\lambda,i}}}(b_\lambda\pi_L^{1/m_i}) = -m_i v_L(b_\lambda)-1 = m_i n -1.
	\]
	Observe that 
	\[
		m_i n - 1 - \left\lfloor\dfrac{m_in}{p}\right\rfloor = p^in-1 - p^{i-1}n = p^{i-1}n(p-1) -1 >0,
	\]
	so the image of $\rsw\chi$ in $F_{m_i n}\hat{\Omega}^1_{\widehat{L_{\lambda,i}}}/F_{\lfloor\frac{m_i n}{p}\rfloor}\hat{\Omega}^1_{\widehat{L_{\lambda,i}}}$ is non-trivial. More precisely, we have 
	\[
		\rsw \chi_{\widehat{L_{\lambda,i}}} =  b_\lambda \pi_L^{1/m_i} d\log(\pi_L^{1/m_i})
	\]
	and, from \autoref{prop:besta} (ii),
	\[
	\sw \chi_{\widehat{L_{\lambda,i}}} = m_i \sw \chi_L - 1. \qedhere
	\]
\end{proof}
\begin{proposition} \label{proposition:supremum}
	Let $L/K$ be an extension of complete discrete valuation fields of characteristic $p>0$. Assume that $k$ is perfect. Then \[\psi_{L/K}^{\ab}=\sup\left\{\dfrac{\psi^{\ab}_{M/K}}{e(M/L)}: M \in S\right\},\] where $S$ is the set of all extensions of complete discrete valuation fields $M/L/K$ such that  the residue field of $M$ is perfect.
\end{proposition}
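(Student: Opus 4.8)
\emph{Plan.} The plan is to reduce the equality to a purely local statement about how the Swan conductor of a single character grows along extensions that perfect the residue field, and then feed in \autoref{lemma:Lpi} and \autoref{lemma:Llambda}. Since $k$ is perfect, passing to $\widehat{K_{ur}}$ (which is unramified over $K$ with algebraically closed residue field) changes neither the ramification indices $e(M/L)$ nor the functions $\psi^{\ab}$ appearing on either side, so I may assume $k$ is algebraically closed and unwind the definition. Because $F_n H^1$ is by definition the subgroup of characters of Swan conductor $\leq n$, the condition ``$\operatorname{Im}(F_{e(K'/K)t}H^1(K')\to H^1(L'))\subset F_{e(L'/L)s}H^1(L')$'' says exactly that $\sw_{L'}(\chi'_{L'})\leq e(L'/L)s$ for every $\chi'\in F_{e(K'/K)t}H^1(K')$. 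Hence, up to the discretization forced by the $\Z_{(p)}$- and integrality-constraints in the definition,
\[
\psi^{\ab}_{L/K}(t)=\sup_{K',\chi'}\frac{\sw_{L'}(\chi'_{L'})}{e(L'/L)},
\]
where $K'/K$ runs over finite tame extensions, $L'=LK'$, and $\chi'$ runs over $F_{e(K'/K)t}H^1(K')$. The same holds for each $M\in S$ with $M'=MK'$ in place of $L'$; using $e(M'/M)e(M/L)=e(M'/L)$ this gives $\tfrac{\psi^{\ab}_{M/K}(t)}{e(M/L)}=\sup_{K',\chi'}\tfrac{\sw_{M'}(\chi'_{M'})}{e(M'/L)}$.

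Since $\chi'_{M'}=(\chi'_{L'})_{M'}$, everything reduces to comparing, for a fixed $\eta=\chi'_{L'}\in H^1(L')$, the quantity $\sw_{M'}(\eta_{M'})/e(M'/L')$ with $\sw_{L'}(\eta)$. So the Proposition will follow once I establish the local claim: for every complete discrete valuation field $L'$ of characteristic $p$ and every $\eta\in H^1(L',\Z/p^s\Z)$,
\[
\sw_{L'}(\eta)=\sup_{N}\frac{\sw_N(\eta_N)}{e(N/L')},
\]
$N/L'$ ranging over extensions with perfect residue field, \emph{and} the optimizing $N$ may be taken of the form $MK'$ with $M/L$ having perfect residue field. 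The key compatibility is that, as $k$ is algebraically closed and $K'/K$ is totally tamely ramified, $L'/L$ leaves the residue field (hence the chosen $p$-basis $\{T_\lambda\}$) unchanged and has ramification index prime to $p$; consequently the constructions of \autoref{lemma:Lpi} and \autoref{lemma:Llambda} carried out over $L$ satisfy $\widehat{L_\pi}\,K'=\widehat{L'_\pi}$ and $\widehat{L_{\lambda,i}}\,K'=\widehat{L'_{\lambda,i}}$, both members of $S$, transferring the local computation back into $\psi^{\ab}_{M/K}$.

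For the inequality $\sw_N(\eta_N)\leq e(N/L')\sw_{L'}(\eta)$ I use that $v_N|_{L'}=e(N/L')\,v_{L'}$, so $F_n W_s(L')$ maps into $F_{e(N/L')n}W_s(N)$ and hence $F_n H^1(L')$ into $F_{e(N/L')n}H^1(N)$; this is the content packaged by \autoref{lemma:commdia}, and it yields the $\leq$ direction of the Proposition. For the reverse inequality I inspect the leading term of $\rsw\eta=a\,d\log\pi_{L'}+\sum_\lambda b_\lambda\,dT_\lambda$. Writing $n=\sw_{L'}(\eta)$, which I may assume positive, $n=\max\!\big(-v_{L'}(a),\,\max_\lambda -v_{L'}(b_\lambda)\big)$ by \autoref{prop:besta}. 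If the maximum is attained by $a$, then \autoref{lemma:Lpi} over $L'$ gives $\sw_{\widehat{L'_\pi}}(\eta)=n$ with $e(\widehat{L'_\pi}/L')=1$, so the value $n$ is attained exactly. If it is attained by some $b_\lambda$, then \autoref{lemma:Llambda} gives $\sw_{\widehat{L'_{\lambda,i}}}(\eta)=m_i n-1$ with $e=m_i=p^i$, so $\tfrac{\sw}{e}=n-p^{-i}\to n$; this shows the supremum equals $n$ and explains why a supremum rather than a maximum is needed. Combined with the compatibility above, taking $M=\widehat{L_\pi}$ or $M=\widehat{L_{\lambda,i}}$ produces elements of $S$ for which $\tfrac{\psi^{\ab}_{M/K}(t)}{e(M/L)}$ comes arbitrarily close to $\psi^{\ab}_{L/K}(t)$, completing the proof.

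The main obstacle is this last local step: correctly reading off the leading term of $\rsw\eta$ and matching it to the right construction, together with the identification $MK'=\widehat{L'_\pi}$ (resp.\ $\widehat{L'_{\lambda,i}}$) and the attendant ramification-index bookkeeping, since the extensions of \autoref{lemma:Lpi} and \autoref{lemma:Llambda} live over $L$ while the Swan-conductor computation takes place over $L'$. A secondary but genuine point of care is the discretization in the definition — the $\Z_{(p)}$-constraints and the integrality of $e(L'/L)s$ and $e(K'/K)t$ — which must be tracked to pass between the infimum in the definition and the supremum of normalized Swan conductors, and then to extend the identity from $\Z_{(p)}$ to all of $\R_{\geq 0}$.
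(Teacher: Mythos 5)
Your overall strategy is the same as the paper's: reduce to a pointwise statement that, for each character after tame base change, the Swan conductor over $L'$ equals the supremum of normalized Swan conductors over extensions with perfect residue field; prove the upper bound with \autoref{lemma:commdia}; and realize the supremum with \autoref{lemma:Lpi} (leading term $a\,d\log\pi$, value attained exactly) and \autoref{lemma:Llambda} (leading term $b_\lambda\,dT_\lambda$, values $n-p^{-i}\to n$, which is why a supremum rather than a maximum appears). This is precisely the skeleton of the paper's proof, including the ramification-index bookkeeping $e(M'/M)e(M/L)=e(M'/L)$.

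The step that fails as stated is your ``key compatibility'' $\widehat{L_{\lambda,i}}\,K'=\widehat{L'_{\lambda,i}}$, together with its premise that $L'/L$ leaves the residue field unchanged. Even with $k$ algebraically closed, writing $K'=K(\pi_K^{1/m})$ and $\pi_K=u\pi_L^{e(L/K)}$ in $L$, the residue field of $L'=LK'$ can grow by a separable extension generated by roots of $\bar{u}$; what survives is only that $\{T_\lambda\}$ remains a lift of a $p$-basis of the residue field of $L'$, because that residue extension is separable. More seriously, $L'_{\lambda,i}$ is built from a prime element $\pi_{L'}$ of $L'$, and in general no prime of $L'$ has $e(L'/L)$-th power equal to the given $\pi_L$: take $e(L/K)=1$ and $\pi_K=u\pi_L$ with $\bar{u}$ not an $m$-th power in $l$, so that $L'=L\bigl((u\pi_L)^{1/m}\bigr)$ is totally ramified of degree $m$ and contains no $m$-th root of $\pi_L$. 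Adjoining $p$-power roots of $T_\lambda-\pi_L^{1/m_i}$ versus of $T_\lambda-\pi_{L'}^{1/m_i}$ then produces genuinely different fields, so the claimed identification has no reason to hold. Fortunately it is also unnecessary: any extension $N/L'$ with perfect residue field is automatically an extension of $L$ with perfect residue field, i.e.\ $N\in S$, and since $K'\subset N$ one has $NK'=N$, so the defining property of $\psi^{\ab}_{N/K}(t)$ applied to the tame extension $K'/K$ gives $\sw\chi'_N\leq e(NK'/N)\,\psi^{\ab}_{N/K}(t)=\psi^{\ab}_{N/K}(t)$ directly, with no need to descend the construction to the level of $L$. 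This is exactly how the paper proceeds (its $S_{L'}\subset S'\subset S$, combined with the identity $\psi^{\ab}_{M/K'}(e(K'/K)t)=\psi^{\ab}_{M/K}(t)$ for perfect-residue-field $M$, which is classical tame transitivity); with this repair your argument coincides with the paper's.
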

\begin{proof}
	
	Let $\{T_\lambda: \lambda \in \Lambda\}$ be a lift of a $p$-basis of $l$ to $\OO_L$ and $L_\pi$ and $L_{\lambda,i}$ be as in \autoref{lemma:Lpi} and \autoref{lemma:Llambda}, respectively. Put 
	\[S_L= \left\{\widehat{L_\pi}\right\}\cup \left\{\widehat{L_{\lambda,i}} : \lambda \in \Lambda,i \in \N \right\},\]
	and let $\chi \in H^1(K,\Z/p^s\Z)$, 
	\[\rsw \chi_{L} = a d\log\pi_{L} + \sum_{\lambda}b_\lambda dT_\lambda \in F_n\hat{\Omega}^1_{L}/F_{\lfloor \frac{n}{p}\rfloor} \hat{\Omega}^1_{L}\] 
	be the refined Swan conductor of $\chi_{L}$, and $n=\sw \chi_{L}>0$. We have that $n= \max \left\{\{-v_L(a)\}\cup \{-v_L({b_\lambda}): \lambda \in \Lambda\}\right\}$. From \autoref{lemma:Lpi}, we have  \[\rsw \chi_{\widehat{L_\pi}}= a d\log\pi_L\]
	and $\sw \chi_L \geq \sw \chi_{\widehat{L_\pi}}$. From \autoref{lemma:Llambda}, we have, when $-v_L(a)=n$, \[\sw \chi_L = \sw \chi_{\widehat{L_\pi}}.\]
	On the other hand, when $n= - v_L(b_\lambda)$, by \autoref{lemma:Llambda}, for $i\geq 2$,
	\[
	\sw \chi_{\widehat{L_{\lambda,i}}} = m_i \sw \chi_L - 1,
	\]
so
\[ \sw \chi_L = \dfrac{\sw \chi_{\widehat{L_{\lambda,i}}}}{m_i} + \dfrac{1}{m_i}.\]
Therefore, in this case, we have
\[
	\sw \chi_L = \lim_{i\to \infty} \dfrac{\sw  \chi_{\widehat{L_{\lambda,i}}}}{m_i}= \sup_{i}\left\{ \dfrac{\sw  \chi_{\widehat{L_{\lambda,i}}}}{m_i}\right\}.
\]

From this it follows that, if $t=\sw \chi$, 
\[
\sw \chi_L =  \sup_{M\in S_L}\left\{ \dfrac{\psi^\ab_{M/K}(t)}{e(M/L)}\right\}= \sup_{M\in S}\left\{ \dfrac{\psi^\ab_{M/K}(t)}{e(M/L)}\right\}.
\]

Let $K'/K$ be a finite tame extension, and $L'=LK'$. Let $\chi \in H^1(K',\Z/p^s\Z)$ be such that $\sw \chi = e(K'/K)t$, where $t\in\Z_{(p)}$. From the same argument as before, we have that
\begin{align*}
\dfrac{\sw \chi_{L'}}{e(L'/L)} &=  \sup_{M\in S_{L'}}\left\{ \dfrac{\psi^\ab_{M/K'}(e(K'/K)t)}{e(M/L)}\right\}\\&=\sup_{M\in S'}\left\{ \dfrac{\psi^\ab_{M/K}(t)}{e(M/L)}\right\}\leq \sup_{M\in S}\left\{ \dfrac{\psi^\ab_{M/K}(t)}{e(M/L)}\right\},
\end{align*}
where $S'$ consists of all extensions of complete discrete valuation fields $M/L'/K'$ such that  the residue field of $M$ is perfect.
Since $\psi^\ab_{L/K}\geq \frac{\psi^\ab_{M/K}}{e(M/K)}$ for every $M\in S$, we conclude that 
\[\psi_{L/K}^{\ab}=\sup\left\{\dfrac{\psi^{\ab}_{M/K}}{e(M/L)}: M \in S\right\}. \qedhere\]
\end{proof}

Assume that the residue field $k$ of $K$ is perfect, and $L/K$ is  a (possibly transcendental) extension of complete discrete valuation fields of positive characteristic. In what follows, we will reduce the study of the properties of $\psi^{\ab}_{L/K}$ to the case where $L/K$ is a finite extension of local fields. In this case, $\psi^{\ab}_{L/K}$ is almost fully understood by the classical theory (\cite{serre1973course}). However, the classical $\psi$-function is only defined for separable extensions, while we want to allow the presence of inseparability. For that reason, we shall first we investigate $\psi^{\ab}_{L/K}$ for purely inseparable finite extensions of local fields, and show that in this case $\psi^{\ab}_{L/K}$ is the identity map. From now on, we will write $\psi_{L/K}$ for $\psi^{\ab}_{L/K}$ in the case of a finite extension of local fields, and call this function ``classical'' (even when the extension $L/K$ has inseparability degree greater than $1$). 
\begin{lemma} Let $L/K$ be a finite extension of local fields of positive characteristic. Assume that $L/K$ is purely inseparable. Then 
	\[
	\psi_{L/K}(t) = t.
	\]
\end{lemma}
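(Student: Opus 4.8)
The plan is to exhibit a finite purely inseparable extension $L/K$ of local fields as being \emph{isomorphic to $K$ as a valued field} through a power of Frobenius, and then to exploit the fact that the absolute Frobenius acts as the identity on $H^1$; together these force the restriction map on cohomology to preserve Swan conductors exactly, after which $\psi_{L/K}(t)=t$ falls out of the definition.

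First I would record the structure of $L/K$. Since $k$ is perfect we have $[K:K^p]=p$, and the only finite purely inseparable extensions of $K=k((\pi_K))$ are the fields $K^{1/p^r}=k((\pi_K^{1/p^r}))$; in particular $L=K^{1/p^r}$ is totally ramified of degree $e(L/K)=p^r$ with perfect residue field $k$, and $L^{p^r}=K$. Hence the $p^r$-power map $\phi\colon L\to K$, $\phi(y)=y^{p^r}$, is an isomorphism of complete discrete valuation fields sending $\pi_L=\pi_K^{1/p^r}$ to $\pi_K$, so that $v_K(\phi(y))=v_L(y)$. Being an isometry of valued fields, $\phi$ carries the filtrations $F_n W_s$, $F_n\hat{\Omega}^1$, and hence Kato's filtration $F_n H^1$ of $L$ isomorphically onto those of $K$. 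Consequently the induced isomorphism $\phi^*\colon H^1(L,\Z/p^s\Z)\to H^1(K,\Z/p^s\Z)$ preserves Swan conductors, i.e.\ $\sw_K(\phi^*\eta)=\sw_L(\eta)$ for all $\eta$.

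The key step combines this with the triviality of Frobenius on cohomology. Writing $\iota\colon K\hookrightarrow L$ for the inclusion, with induced restriction $\iota^*\colon H^1(K)\to H^1(L)$, the composite field homomorphism $\phi\circ\iota\colon K\to K$ is the $p^r$-power map $a\mapsto a^{p^r}$. Via Artin--Schreier--Witt, $H^1(K,\Z/p^s\Z)=W_s(K)/(F-1)W_s(K)$, and this power map induces on $W_s(K)$ the Witt-vector Frobenius $F^r$, which is congruent to the identity modulo $(F-1)W_s(K)$ since $F^r-1=(F-1)(F^{r-1}+\cdots+1)$; thus $(\phi\circ\iota)^*=\phi^*\circ\iota^*$ is the identity on $H^1(K,\Z/p^s\Z)$ (this is the statement that the absolute Frobenius acts trivially on $H^1$). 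Therefore, for every $\chi\in H^1(K,\Z/p^s\Z)$, we obtain $\sw_K(\chi)=\sw_K(\phi^*(\iota^*\chi))=\sw_L(\iota^*\chi)$, so that \emph{restriction along $L/K$ preserves Swan conductors exactly}.

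Finally I would feed this into the definition of $\psi^{\ab}_{L/K}$, reducing first to $k$ algebraically closed via base change to $\widehat{K_{ur}}$ (which keeps the extension purely inseparable of the same form). For any finite tame $K'/K$ one checks, using linear disjointness, that $L'=LK'=K'^{1/p^r}$ is again purely inseparable with $e(L'/L)=e(K'/K)$, so the previous paragraph applies verbatim to $L'/K'$ and restriction $H^1(K')\to H^1(L')$ preserves Swan conductors. Hence $\operatorname{Im}(F_{e(K'/K)t}H^1(K')\to H^1(L'))\subseteq F_{e(L'/L)s}H^1(L')$ holds exactly when $e(L'/L)s\geq e(K'/K)t$, that is, when $s\geq t$; and since characters of every prescribed Swan conductor exist, taking $K'=K$ shows no smaller $s$ qualifies. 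The infimum defining $\psi^{\ab}_{L/K}(t)$ is therefore $t$ for all $t\in\Z_{(p)}$, and the supremum extension yields $\psi_{L/K}(t)=t$ on $[0,\infty)$. The main obstacle is the middle step, establishing that restriction preserves Swan conductors; the surrounding bookkeeping is routine, but one must take care that $\phi$ is genuinely an isometry respecting Kato's filtration and that the two contravariant functorialities compose as claimed.
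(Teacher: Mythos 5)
Your proposal is correct and takes essentially the same approach as the paper: both identify $L$ with $K$ via the $p^r$-power map $N_{L/K}\colon x\mapsto x^{p^r}$, an isomorphism of complete discrete valuation fields, deduce that restriction $H^1(K)\to H^1(L)$ preserves Swan conductors exactly, and then apply the same argument to $LK'/K'$ for tame $K'/K$ to conclude from the definition of $\psi^{\ab}_{L/K}$. Your middle step --- showing $\phi^*\circ\iota^*$ is the identity on $H^1(K,\Z/p^s\Z)$ via $F^r-1=(F-1)(F^{r-1}+\cdots+1)$ --- simply makes explicit what the paper leaves implicit when it asserts that the induced isomorphism $G_K\simeq G_L$ respects upper ramification subgroups.
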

\begin{proof}
	Since $L/K$ is purely inseparable, we can write $L = K^{1/p^r}$ for some $r\geq 0$.
	The map $N_{L/K}: L \to K$ is given by $x \mapsto x^{p^r}$ and is an isomorphism of complete discrete valuation fields. It induces an isomorphism $G_K \simeq G_L$ which respects upper ramification subgroups, so it follows that $\sw \chi_L = \sw \chi$ for any $\chi \in H^1(K)$.   
%
	The same argument holds for the extension $LK'/K'$, where $K'/K$ is a finite tamely ramified extension, so we have that 
	\[
	\psi_{L/K}(t)=t.\qedhere
	\]
\end{proof}

We now have all the necessary tools to start the reduction to the classical case. We start by proving the following transitivity formula: 
\begin{lemma} \label{lemma:transitivity}
	Let $L/K$ be an extension of  complete discrete valuation fields of characteristic $p>0$, and assume that the residue fields of $K$ and $L$ are perfect. Then, if $M/L$ is a finite extension of complete discrete valuation fields, we have  
	\[
	\psi^{\ab}_{M/K} = \psi^{\ab}_{M/L} \circ \psi^{\ab}_{L/K}=\psi_{M/L} \circ \psi^{\ab}_{L/K}.
	\]
\end{lemma}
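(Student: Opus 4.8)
The plan is to prove the transitivity formula
\[
\psi^{\ab}_{M/K} = \psi^{\ab}_{M/L} \circ \psi^{\ab}_{L/K}
\]
by reducing, via \autoref{proposition:supremum}, to a statement about Swan conductors and then invoking the injectivity of the refined Swan conductor from \autoref{prop:besta}. Since the residue fields of $K$ and $L$ are already assumed perfect, and $M/L$ is finite, I first reduce to the algebraically closed residue field case by base change to $\widehat{K_{ur}}$, which is permitted by the definition of $\psi^{\ab}$. The second equality $\psi^{\ab}_{M/L}=\psi_{M/L}$ is then immediate, since $L$ has perfect residue field and $M/L$ is a finite extension of local fields, so this reduces to the classical (possibly inseparable) case treated in the preceding lemmas.

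For the main equality, I would argue directly on the level of Swan conductors. Fix $\chi \in H^1(K, \Z/p^s\Z)$ and set $t = \sw \chi$. By the definition of $\psi^{\ab}_{L/K}$ together with the refined-Swan-conductor machinery, one has $\sw \chi_L = \psi^{\ab}_{L/K}(t)$ (after base change to make the residue field perfect, so that the single tame-extension supremum collapses). Applying the same principle to the extension $M/L$ and the character $\chi_L \in H^1(L)$, whose Swan conductor is $\psi^{\ab}_{L/K}(t)$, gives
\[
\sw \chi_M = \psi^{\ab}_{M/L}\!\left(\sw \chi_L\right) = \psi^{\ab}_{M/L}\!\left(\psi^{\ab}_{L/K}(t)\right).
\]
On the other hand, directly computing the Swan conductor of $\chi_M$ over $K$ yields $\sw \chi_M = \psi^{\ab}_{M/K}(t)$. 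Equating these two expressions for $\sw \chi_M$ gives the desired identity on the dense set $t \in \Z_{(p)}$ of values realized as $\sw \chi$; since all three functions are determined by their values on $\Z_{(p)}$ via the supremum extension in the definition, the equality propagates to all of $\R_{\geq 0}$.

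The step requiring the most care is justifying that $\sw \chi_L = \psi^{\ab}_{L/K}(\sw \chi)$ precisely, rather than merely an inequality, so that the composition chains exactly. The definition of $\psi^{\ab}_{L/K}(t)$ is an infimum over $s$ subject to an image-containment condition ranging over all tame base extensions $K'/K$, so I must confirm that, over a field with algebraically closed residue field and with $L/K$ of perfect residue fields, this infimum is attained and equals the actual Swan conductor $\sw \chi_L$ for a character $\chi$ of Swan conductor exactly $t$. This is where the compatibility established in \autoref{lemma:commdia} and the injectivity in \autoref{prop:besta}(ii) do the real work, controlling how the refined Swan conductor transforms under the base change $L/K$ and ensuring the containment is sharp. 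The remaining subtlety is keeping track of the tame extensions $K'/K$ appearing in the definition: I would handle these uniformly by noting that a tame extension of $K$ induces a tame extension of $L$ of the same ramification index (as $L/K$ has perfect residue fields), so the suprema over tame towers match on both sides and the transitivity of the infimum-definitions is preserved.
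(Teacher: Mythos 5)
Your two reduction steps (passing to algebraically closed residue fields, and identifying $\psi^{\ab}_{M/L}$ with the classical $\psi_{M/L}$ since $M/L$ is a finite extension of local fields) are fine and agree with the paper, but the core of your argument rests on a false identity: for an individual character $\chi \in H^1(K,\Z/p^s\Z)$ it is \emph{not} true that $\sw \chi_L = \psi^{\ab}_{L/K}(\sw \chi)$. The quantity $\psi^{\ab}_{L/K}(t)$ is an infimum of bounds that must hold for \emph{all} characters in $F_{e(K'/K)t}H^1(K')$ and all tame $K'/K$; in effect it is a supremum of $\sw \chi_{L'}/e(L'/L)$ over all such characters. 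A single character only satisfies $\sw \chi_L \leq \psi^{\ab}_{L/K}(\sw \chi)$, and the inequality is in general strict: take $K=k((T))$ with $k$ algebraically closed, $L=K(\alpha)$ with $\alpha^p-\alpha=1/T$ (so the residue field of $L$ is $k$, and the hypotheses of the lemma hold), and let $\chi$ be the Artin--Schreier character attached to $1/T$. Then $\sw \chi=1$, but $\chi_L$ is trivial, so $\sw \chi_L=0$, while $\psi^{\ab}_{L/K}(1)=\psi_{L/K}(1)=1$. No amount of refined-Swan-conductor bookkeeping (\autoref{lemma:commdia}, \autoref{prop:besta}) can repair this, because the sharpness you need simply fails character by character.

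Once your equalities are replaced by the correct inequalities, the argument only yields the easy half, $\psi^{\ab}_{M/K} \leq \psi_{M/L}\circ\psi^{\ab}_{L/K}$ (use $\sw \chi_M\le\psi_{M/L}(\sw \chi_L)$, monotonicity of $\psi_{M/L}$, and take suprema). The genuinely hard half, $\psi^{\ab}_{M/K} \geq \psi_{M/L}\circ\psi^{\ab}_{L/K}$, requires producing, for each $t$ and each $s<\psi_{M/L}(\psi^{\ab}_{L/K}(t))$, a \emph{single} character over a \emph{single} tame extension $K'/K$ that is nearly extremal for $L/K$ and whose restriction to $LK'$ is simultaneously nearly extremal for $M/L$; your proposal never produces such a character, and this is where the real content lies. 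The paper's proof avoids asserting any equality for individual characters: it works directly with the containment and non-containment conditions in the infimum definition, writing $s=\psi_{M/L}(\psi^{\ab}_{L/K}(t)+\epsilon)$ for the upper bound and $s=\psi_{M/L}(\psi^{\ab}_{L/K}(t)-\epsilon)-\epsilon'$ for the lower bound (this uses that $\psi_{M/L}$ is the classical, continuous, strictly increasing Herbrand function), and then chains the two filtration conditions through the tame tower. Incidentally, your side remark that a tame extension $K'/K$ induces a tame extension $LK'/L$ of the \emph{same} ramification index is also false in general (e.g. $L=K'=K(\pi_K^{1/e})$ gives $e(K'/K)=e$ but $e(LK'/L)=1$), although only tameness, not preservation of the index, is needed at the point where you invoke it.
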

\begin{proof}
%
	From the definition, we can assume that $l$ and $k$ are algebraically closed. It is enough to show, for $t\in\Z_{(p)}$, $t\geq0$, that $\psi^{\ab}_{M/K}(t) \leq \psi^{\ab}_{M/L} \circ \psi^{\ab}_{L/K}(t)$ and 
	$\psi^{\ab}_{M/K}(t) \geq \psi^{\ab}_{M/L} \circ \psi^{\ab}_{L/K}(t)$.
	
	 Let  $s\in \Z_{(p)}$ such that $s> \psi^{\ab}_{M/L}(\psi^{\ab}_{L/K}(t))$. Recall that $\psi^{\ab}_{M/L}$ coincides with the classical $\psi$-function (\cite{isabel2}). Therefore we can write $s=  \psi^{\ab}_{M/L}(\psi^{\ab}_{L/K}(t)+\epsilon)$ for some $\epsilon>0$. Then, for every $s'\in \Z_{(p)}$ such that $0\leq s'\leq\psi^{\ab}_{L/K}(t)+\epsilon$, we have
	\[
		\operatorname{Im}(F_{e(L'/L)s'}H^1(L') \to H^1(ML')) \subset F_{e(ML'/M)s}H^1(ML') 
	\]
	for all finite, tame extensions $L'/L$ of complete discrete valuation fields such that $e(ML'/M)s, e(L'/L)s'\in \Z$. On the other hand, given  $r\in\Z_{(p)}$ such that $r\geq \psi^{\ab}_{L/K}(t)$,
	\[
		\operatorname{Im}(F_{e(K'/K)t}H^1(K') \to H^1(LK')) \subset F_{e(LK'/L)r}H^1(LK')  
	\]
	for all finite, tame extensions $K'/K$ of complete discrete valuation fields such that $e(LK'/L)r, e(K'/K)t\in \Z$. Taking $r\in \Z_{(p)}$ such that $\psi^{\ab}_{L/K}(t)+\epsilon \geq r\geq \psi^{\ab}_{L/K}(t)$, we see 
	that
	\[
	\operatorname{Im}(F_{e(K'/K)t}H^1(K') \to H^1(LK')) \subset F_{e(MK'/M)s}H^1(LK')  
	\]
	for all finite, tame extensions $K'/K$ of complete discrete valuation fields such that $e(ML'/M)s, e(K'/K)t\in \Z$. Hence
	 $\psi^{\ab}_{M/K}(t) \leq \psi^{\ab}_{M/L} \circ \psi^{\ab}_{L/K}(t)$.
	
	Now let $s\in \Z_{(p)}$ such that $0\leq s< \psi^{\ab}_{M/L}(\psi^{\ab}_{L/K}(t))$. By the properties  of the classical $\psi$, we can write $s=  \psi^{\ab}_{M/L}(\psi^{\ab}_{L/K}(t)-\epsilon)-\epsilon'$ for some $\epsilon, \epsilon'>0$. We can assume that $(\psi^{\ab}_{L/K}(t)-\epsilon)\in \Z_{(p)}$ by making an appropriate choice of $\epsilon, \epsilon'$. Then we have, from the definitions, 
	\[
	\operatorname{Im}(F_{e(K'/K)t}H^1(K') \to H^1(LK')) \not\subset F_{e(LK'/L)(\psi^{\ab}_{L/K}(t)-\epsilon)}H^1(LK')  
	\]
	and
	\[
	\operatorname{Im}(F_{e(LK'/L)(\psi^{\ab}_{L/K}(t)-\epsilon)}H^1(LK') \to H^1(MK')) \not\subset F_{e(MK'/M)s}H^1(MK') 
	\]
	for some finite, tame extension $K'/K$ of complete discrete valuation fields such that $e(MK'/M)s, e(LK'/L)(\psi^{\ab}_{L/K}(t)-\epsilon), e(K'/K)t\in \Z$. It follows that $\psi^{\ab}_{M/K}(t)>s$. Thus $\psi^{\ab}_{M/K}(t) \geq \psi^{\ab}_{M/L} \circ \psi^{\ab}_{L/K}(t)$.
\end{proof}
\begin{lemma} \label{lemma:perfectpsi}
	Let $L/K$ be an extension of complete discrete valuation fields of characteristic $p>0$. Assume that $k$ and $l$ are perfect. Let  $L_0 = K \otimes_{W(k)}W(l)$. Then 
	\[
		\psi^{\ab}_{L/K} = \psi^{\ab}_{L/L_0}= \psi_{L/L_0}.
	\]
\end{lemma}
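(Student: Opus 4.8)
The plan is to factor the extension $L/K$ through the intermediate field $L_0$, which by construction has residue field $l$ and satisfies $e(L_0/K)=1$; since $L$ also has residue field $l$, the extension $L/L_0$ is finite and totally ramified. Because both $L_0$ and $L$ are local fields with the perfect residue field $l$, the second equality $\psi^{\ab}_{L/L_0}=\psi_{L/L_0}$ is exactly what it means for the generalized function to coincide with the classical one on a finite extension of local fields: the agreement for the separable part is \cite[Theorem 5.5]{isabel2}, and the purely inseparable contribution is handled by the lemma proved just above. I would record this reduction first, and then devote the argument to the remaining equality $\psi^{\ab}_{L/K}=\psi^{\ab}_{L/L_0}$.

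For that first equality, I would apply the transitivity formula of \autoref{lemma:transitivity} to the tower $K\subset L_0\subset L$; its hypotheses hold because the residue fields $k$ and $l$ are perfect and $L/L_0$ is finite. This yields
\[
\psi^{\ab}_{L/K}=\psi^{\ab}_{L/L_0}\circ\psi^{\ab}_{L_0/K},
\]
so the whole lemma collapses to the assertion that $\psi^{\ab}_{L_0/K}$ is the identity map. To use the defining infimum, I would first reduce to the case of algebraically closed residue field by replacing $K$ and $L_0$ with $\widehat{K_{ur}}$ and $\widehat{L_0K_{ur}}$; this preserves perfectness of the residue fields (a compositum of perfect fields is perfect) and the relation $e(L_0/K)=1$, so the reduced situation is again of the required shape.

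The heart of the proof, and the step I expect to be the main obstacle, is showing $\psi^{\ab}_{L_0/K}(t)=t$. Here I would exploit that $k$ and $l$, being perfect, have empty $p$-basis, so that $\hat{\Omega}^1_{\OO_K}(\log)$ and $\hat{\Omega}^1_{\OO_{L_0}}(\log)$ are free of rank one on $d\log\pi_K$ and $d\log\pi_{L_0}=d\log\pi_K$. Given a finite tame $K'/K$ and $\chi\in H^1(K')$, I would write $\rsw\chi=a\,d\log\pi_{K'}$ and note that $L_0'=L_0K'$ satisfies $e(L_0'/K')=1$ (from $e(L_0/K)=1$ and tameness of $K'/K$). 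The commutative diagram of \autoref{lemma:commdia} then sends $\rsw\chi$ to $a\,d\log\pi_{K'}$ with the valuation unchanged, and since $-v_{L_0'}(a)=-v_{K'}(a)=\sw\chi$ this image is nontrivial in the relevant quotient; \autoref{prop:besta}(ii) gives $\sw\chi_{L_0'}=\sw\chi$. Feeding this into the defining infimum, every $\chi\in F_{e(K'/K)t}H^1(K')$ restricts to $H^1(L_0')$ with $\sw\chi_{L_0'}\leq e(K'/K)t$, so the containment $\operatorname{Im}(F_{e(K'/K)t}H^1(K')\to H^1(L_0'))\subset F_{e(L_0'/L_0)s}H^1(L_0')$ holds precisely when $s\geq t$. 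The lower bound requires, for each $t$ with $e(K'/K)t\in\Z$, a character of Swan conductor exactly $e(K'/K)t$, which Artin--Schreier--Witt theory over a field with perfect residue field provides. Hence $\psi^{\ab}_{L_0/K}(t)=t$, and combining with the transitivity identity gives $\psi^{\ab}_{L/K}=\psi^{\ab}_{L/L_0}=\psi_{L/L_0}$.
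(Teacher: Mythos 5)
Your proposal is correct and follows essentially the same route as the paper: factor $L/K$ through $L_0$, observe that $L/L_0$ is a finite extension of local fields with $e(L_0/K)=1$, and conclude by \autoref{lemma:transitivity} together with the fact that $\psi^{\ab}_{L_0/K}$ is the identity. The only difference is one of detail: the paper simply asserts that $\psi^{\ab}_{L_0/K}=\mathrm{id}$ (it also follows from \autoref{theorem:oldformula}, since $\hat{\Omega}^1_{\OO_{L_0}}(\log)$ and $\hat{\Omega}^1_{\OO_K}(\log)$ are both generated by $d\log\pi_K$, so $\delta_\tor(L_0/K)=0$), whereas you prove it directly by the refined Swan conductor argument in the style of \autoref{lemma:Lpi}; that filled-in step is sound.
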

\begin{proof}
	$L_0$ has perfect residue field $l$ and $e(L_0/K)=1$. We have that $\psi^{\ab}_{L_0/K}$ is the identity map,  and $L/L_0$ is a finite extension of local fields. Then the result follows from \autoref{lemma:transitivity}.
\end{proof}

\begin{theorem} \label{theorem:cont}
	Let $L/K$ be an extension of complete discrete valuation fields of characteristic $p>0$. Assume that $k$ is perfect. Then $\psi_{L/K}^{\ab}:[0,\infty)\to [0,\infty)$ is continuous, increasing, and convex. In particular, it is bijective.
\end{theorem}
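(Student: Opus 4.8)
The plan is to derive all four properties from the supremum description of \autoref{proposition:supremum} together with \autoref{lemma:perfectpsi}, which between them reduce the statement to facts about the classical Hasse--Herbrand function of a finite extension of local fields. First I would write, using \autoref{proposition:supremum},
\[
\psi_{L/K}^{\ab} = \sup_{M \in S}\frac{\psi_{M/K}^{\ab}}{e(M/L)},
\]
and note that every $M \in S$ has perfect residue field, so \autoref{lemma:perfectpsi} identifies $\psi_{M/K}^{\ab}$ with the classical $\psi_{M/L_0}$ for the corresponding base $L_0$ with $e(L_0/K)=1$. By the classical theory in the separable case, together with the computation that $\psi$ is the identity for purely inseparable finite extensions and the transitivity of \autoref{lemma:transitivity} to absorb any inseparability, each $\psi_{M/L_0}$ is continuous, strictly increasing, and convex, vanishes at $0$, and satisfies $\psi_{M/L_0}(t) \geq t$. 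Dividing by the positive constant $e(M/L)$ preserves all of these, so each member $g_M := \psi_{M/K}^{\ab}/e(M/L)$ of the family is continuous, increasing, convex, with $g_M(0) = 0$.

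Next I would invoke the stability of monotonicity and convexity under pointwise suprema: a pointwise supremum of increasing functions is increasing, and a pointwise supremum of convex functions is convex as soon as it is finite-valued, which here holds by construction since $\psi_{L/K}^{\ab}$ takes values in $\R_{\geq 0}$. Hence $\psi_{L/K}^{\ab}$ is increasing and convex. Continuity on $(0,\infty)$ is then automatic, a finite convex function being continuous on the interior of its domain. At the endpoint $t = 0$ I would argue that $\psi_{L/K}^{\ab}$ is lower semicontinuous, being a supremum of continuous functions, so $\psi_{L/K}^{\ab}(0) \leq \liminf_{t\to 0^+}\psi_{L/K}^{\ab}(t)$, while convexity gives $\limsup_{t\to 0^+}\psi_{L/K}^{\ab}(t) \leq \psi_{L/K}^{\ab}(0)$; the two bounds force continuity at $0$.

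For bijectivity onto $[0,\infty)$ I would treat surjectivity and injectivity separately. Taking the single term $M = \widehat{L_\pi} \in S$, for which $e(\widehat{L_\pi}/L) = 1$, the supremum yields the lower bound $\psi_{L/K}^{\ab}(t) \geq \psi_{\widehat{L_\pi}/L_0}(t) \geq t$; in particular $\psi_{L/K}^{\ab}(t) \to \infty$ as $t \to \infty$ and $\psi_{L/K}^{\ab}(t) > 0$ for $t > 0$. Surjectivity then follows from continuity, $\psi_{L/K}^{\ab}(0) = 0$, and unboundedness by the intermediate value theorem. For injectivity it suffices to show strict monotonicity: if $\psi_{L/K}^{\ab}$ were not strictly increasing, then by convexity and monotonicity it would be constant equal to $\psi_{L/K}^{\ab}(0) = 0$ on some initial segment $[0,b]$ with $b > 0$, contradicting $\psi_{L/K}^{\ab}(t) \geq t > 0$ there.

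I expect the genuine obstacle to lie not in these soft convex-analysis arguments but in the reduction step: checking that for every $M \in S$ the extension $M/L_0$ really falls within the scope of the (possibly inseparable) classical theory, so that \autoref{lemma:perfectpsi} and the classical properties apply uniformly across the whole family indexed by $S$. Once that is secured, every property of $\psi_{L/K}^{\ab}$ descends from the corresponding property of the classical $\psi$ by passage to the supremum.
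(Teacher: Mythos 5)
Your proposal is correct and follows essentially the same route as the paper: both reduce the statement via \autoref{proposition:supremum} and \autoref{lemma:perfectpsi} to properties of the classical Hasse--Herbrand functions (convexity from Serre, identity on purely inseparable parts, transitivity), and both then derive the increasing property from convexity together with $\psi_{L/K}^{\ab}(0)=0$ and positivity. The only minor divergence is the continuity step, where the paper asserts equicontinuity of the family $\left\{\psi^{\ab}_{M/K}/e(M/L)\right\}$ (its slopes are uniformly bounded by $e(L/K)$), while you instead use that a finite pointwise supremum of convex functions is convex, hence continuous on $(0,\infty)$, plus lower semicontinuity and convexity to handle $t=0$; both arguments are sound.
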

\begin{proof}
	Let $S$ be as in \autoref{proposition:supremum}, and \[\mathcal{S}=\left\{\dfrac{\psi^{\ab}_{M/K}}{e(M/L)}: M \in S\right\}.\]
	By \cite[Chapter IV, \S 3, Proposition 13]{serre1979local}, the classical Hasse-Herbrand $\psi$-functions are convex. Then, by \autoref{lemma:perfectpsi}, $\mathcal{S}$ is an equicontinuous set of convex functions. It follows from \autoref{proposition:supremum} that $\psi_{L/K}^{\ab}$ is continuous and convex.  
	
	To prove that $\psi_{L/K}^{\ab}$ is increasing, observe that, since  $\psi_{M/K}^{\ab}(0)=0$ for every $M \in S$, we have that $\psi_{L/K}^{\ab}(0)=0$.  For $t \in (0,1)$ and $y\in (0,\infty)$ we have, by convexity of $\psi_{L/K}^{\ab}$,
	\[
		t \psi_{L/K}^{\ab}(y)=\psi_{L/K}^{\ab}(0)(1-t) + t \psi_{L/K}^{\ab}(y) \geq \psi_{L/K}^{\ab}(t y).
	\] 
	Further, for $y>0$, clearly $\psi_{L/K}^{\ab}(y)>0$. Then, putting $t=x/y$ for $x,y \in (0,\infty)$ with $x<y$, we get
	\[
		\psi_{L/K}^{\ab}(y)> t \psi_{L/K}^{\ab}(y)\geq \psi_{L/K}^{\ab}(x),
	\] 
	so $\psi_{L/K}^{\ab}$ is increasing.
\end{proof}

\begin{lemma}	\label{lemma:integrality} Let $L/K$ be an extension of complete discrete valuation fields of characteristic $p>0$. Assume that $k$ is perfect. 
	Then, for $t\in \Z_{\geq 0}$, we have $\psi_{L/K}^{\ab}(t)\in\Z_{\geq0}$.	
\end{lemma}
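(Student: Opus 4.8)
The plan is to show that, for $t\in\Z_{\geq 0}$, the number $\psi^{\ab}_{L/K}(t)$ is a supremum of Swan conductors taken over $L$ itself, and then to use that Swan conductors are non-negative integers. First I would reduce to the case in which $k$ is algebraically closed: by definition $\psi^{\ab}_{L/K}=\psi^{\ab}_{\widehat{LK_{ur}}/\widehat{K_{ur}}}$, and the residue field of $\widehat{K_{ur}}$ is the algebraic closure of $k$, so we may assume from the outset that $k$ is algebraically closed.

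With $k$ algebraically closed and $t\in\Z_{\geq 0}$ fixed, unwinding the definition of $\psi^{\ab}_{L/K}(t)$ shows that the smallest admissible $s$ coming from the trivial tame extension $K'=K$ (for which $e(L'/L)=1$) is exactly
\[
N:=\sup\left\{\sw\chi_L:\chi\in H^1(K),\ \sw\chi\leq t\right\},
\]
since the condition $\operatorname{Im}(F_tH^1(K)\to H^1(L))\subset F_sH^1(L)$ says precisely that $\sw\chi_L\leq s$ for every $\chi$ with $\sw\chi\leq t$. Thus $\psi^{\ab}_{L/K}(t)\geq N$. For the reverse inequality I would invoke \autoref{proposition:supremum}, whose proof (through the explicit computations of $\sw\chi_{\widehat{L_\pi}}$ and $\sw\chi_{\widehat{L_{\lambda,i}}}$ in \autoref{lemma:Lpi} and \autoref{lemma:Llambda}, and the treatment of a general finite tame $K'/K$) expresses the full supremum defining $\psi^{\ab}_{L/K}(t)$ in terms of the quantities $\sw\chi_L$ and shows that no finite tame base change contributes a value exceeding $N$. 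Hence $\psi^{\ab}_{L/K}(t)=N$.

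Finally I would conclude by an attainment argument. Each $\sw\chi_L$ appearing in $N$ is the Swan conductor of a character over the complete discrete valuation field $L$, hence a non-negative integer by construction of the filtration $F_nH^1(L)$. By \autoref{theorem:cont} the function $\psi^{\ab}_{L/K}$ takes values in $[0,\infty)$, so $N=\psi^{\ab}_{L/K}(t)$ is finite; a nonempty set of non-negative integers that is bounded above attains its maximum, so $N=\sw\chi_L$ for a single $\chi$, and therefore $\psi^{\ab}_{L/K}(t)=N\in\Z_{\geq 0}$.

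The step I expect to be the main obstacle is the identity $\psi^{\ab}_{L/K}(t)=N$, that is, verifying that for integer $t$ neither the finite tame base changes $K'/K$ built into the definition nor the auxiliary extensions with perfect residue field can force a genuinely fractional value. The tame base changes are controlled exactly by the estimate in the proof of \autoref{proposition:supremum}; and the passage to perfect residue fields is harmless precisely because the quantity being extracted, $\sw\chi_L$, is already an integer, independently of the non-integer limits $\sw\chi_{\widehat{L_{\lambda,i}}}/m_i$ that are used to express it.
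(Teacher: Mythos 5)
Your proposal is correct and follows essentially the same route as the paper: your $N$ is precisely the paper's $m=\inf\{s\in\Z_{\geq 0}: \operatorname{Im}(F_tH^1(K)\to H^1(L))\subset F_{s}H^1(L)\}$, and both arguments identify $\psi^{\ab}_{L/K}(t)$ with this quantity by combining the trivial lower bound coming from the extension $K'=K$ with the estimate extracted from \autoref{proposition:supremum} showing that finite tame base changes contribute nothing larger. The only cosmetic difference is that you deduce integrality by attainment of a bounded supremum of Swan conductors, whereas the paper notes directly that the infimum of admissible integers is an integer.
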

\begin{proof}
	Observe that \[\inf\{s\in \Z_{\geq 0}: \operatorname{Im}(F_tH^1(K) \to H^1(L)) \subset F_{s}H^1(L)  \} \in \Z_{\geq 0}. \]
	Let $m=\inf\{s\in \Z_{\geq 0}: \operatorname{Im}(F_tH^1(K) \to H^1(L)) \subset F_{s}H^1(L)$. Then $m \leq \psi_{L/K}^\ab(t)$. It is enough to show that, for a finite tame extension $K'/K$, 
	\[\inf\{s\in \Z_{\geq 0}: \operatorname{Im}(F_{e(K'/K)t}H^1(K') \to H^1(L')) \subset F_{e(L'/L)m}H^1(L')  \}, \]
	where $L'=LK'$. 
	
	From \autoref{proposition:supremum}, we have, for $\chi \in F_tH^1(K,\Z/p^s\Z)$,  \[\sw \chi_L=\sup\left\{\dfrac{\sw \chi_M}{e(M/L)}: M \in S\right\}=m,\] where $S$ is the set of all extensions of complete discrete valuation fields $M/L/K$ such that  the residue field of $M$ is perfect. For $\chi \in F_{e(K'/K)t}H^1(K',\Z/p^s\Z)$,  
	\begin{align*}
	\dfrac{\sw \chi_{L'}}{e(L'/L)} &=  \sup_{M\in S'}\left\{ \dfrac{\psi^\ab_{M/K'}(e(K'/K)t)}{e(M/L)}\right\}\\&=\sup_{M\in S'}\left\{ \dfrac{\psi^\ab_{M/K}(t)}{e(M/L)}\right\}\\&\leq \sup_{M\in S}\left\{ \dfrac{\psi^\ab_{M/K}(t)}{e(M/L)}\right\} = m,
	\end{align*}
	where $S'$ consists of all extensions of complete discrete valuation fields $M/L'/K'$ such that  the residue field of $M$ is perfect. Thus $\psi_{L/K}^\ab(t)\in \Z$ and, furthermore, 
	\[
	\psi^\ab_{L/K}(t)= \inf\{s\in \Z_{\geq 0}: \operatorname{Im}(F_tH^1(K) \to H^1(L)) \subset F_{s}H^1(L)  \}. \qedhere
	\]
\end{proof}

\begin{theorem} \label{theorem:piecewiselinear}
		Let $L/K$ be an extension of complete discrete valuation fields of characteristic $p>0$. Assume that $k$ is perfect. 
		Then $\psi_{L/K}^{\ab}:[0,\infty)\to [0,\infty)$ is piecewise linear. 
\end{theorem}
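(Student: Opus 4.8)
The plan is to combine the global shape of $\psi^{\ab}_{L/K}$ already obtained in \autoref{theorem:cont} and \autoref{theorem:oldformula} with the reduction to classical functions furnished by \autoref{proposition:supremum}. First I would record that, by \autoref{theorem:cont}, the function $\psi := \psi^{\ab}_{L/K}$ is continuous, increasing and convex, so its right derivative $D = \psi'_+$ exists and is non-decreasing on $[0,\infty)$; and by \autoref{theorem:oldformula} there is a threshold $T$ (one may take $T = \tfrac{p}{p-1}\tfrac{\delta_\tor(L/K)}{e(L/K)}$) beyond which $\psi(t) = e(L/K)t - \delta_\tor(L/K)$ is affine with slope $e(L/K)$. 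Thus $D(t) = e(L/K)$ for $t > T$, and by monotonicity $0 \le D \le e(L/K)$ everywhere. Since a convex function is piecewise linear on a compact interval precisely when its right derivative assumes only finitely many values there, the theorem reduces to showing that $D$ takes finitely many distinct values on $[0,T]$.

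Next I would pass to the classical setting through \autoref{proposition:supremum}, writing $\psi = \sup_{M \in S} g_M$ with $g_M = \psi^{\ab}_{M/K}/e(M/L)$. For each $M \in S$ the residue field is perfect, so by \autoref{lemma:perfectpsi} the function $\psi^{\ab}_{M/K}$ coincides with a classical Hasse--Herbrand function of a finite extension of local fields; in particular each $g_M$ is piecewise linear with finitely many breaks, is convex, and has eventual slope $e(M/K)/e(M/L) = e(L/K)$. The remaining difficulty is therefore purely one of \emph{finiteness}: a supremum of infinitely many piecewise linear functions need not be piecewise linear, so I must show that on $[0,T]$ the supremum is in fact realized as the maximum of finitely many of the $g_M$, equivalently that only finitely many distinct slopes occur.

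To control the supremum I would use the explicit description of $S$ coming out of the proof of \autoref{proposition:supremum}, where the relevant fields are $\widehat{L_\pi}$ and the family $\widehat{L_{\lambda,i}}$. The term $g_{\widehat{L_\pi}} = \psi_{\widehat{L_\pi}/K}$ is a single classical function and contributes finitely many slopes. For the $T_\lambda$-directions, \autoref{lemma:Llambda} gives $\sw\chi_{\widehat{L_{\lambda,i}}} = p^i \sw\chi_L - 1$, so $g_{\widehat{L_{\lambda,i}}}$ approaches its limiting contribution strictly from below as $i \to \infty$; I would show that the limit $\tilde g_\lambda = \sup_i g_{\widehat{L_{\lambda,i}}}$ is again piecewise linear, by identifying it---via the interchange of the roles of $dT_\lambda$ and $d\log\pi_L$ underlying \autoref{lemma:Llambda}---with a classical Hasse--Herbrand function, and that on $[0,T]$ only finitely many $\lambda$ contribute to the supremum. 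Here the essential input is the finiteness of $\delta_\tor(L/K)$: the torsion module has finite length, so only finitely many basis directions $dT_\lambda$ can support wild ramification detected by characters of bounded Swan conductor (and \autoref{lemma:Lpi} handles the $d\log\pi_L$ direction). Granting this, $\psi|_{[0,T]} = \max\{\,g_{\widehat{L_\pi}}, \tilde g_{\lambda_1}, \dots, \tilde g_{\lambda_r}\,\}$ is a finite maximum of piecewise linear functions, hence piecewise linear; together with the affine behaviour past $T$ this yields that $\psi^{\ab}_{L/K}$ is piecewise linear on $[0,\infty)$. I expect the main obstacle to be precisely this taming of the double limit---the passage $i \to \infty$ together with the a priori infinite $p$-basis index set $\Lambda$---and in particular the verification that each limiting direction function $\tilde g_\lambda$ is genuinely piecewise linear rather than merely a convex limit of piecewise linear functions.
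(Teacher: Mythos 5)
Your skeleton is the same as the paper's: reduce to a compact interval $[0,\tilde t\,]$ via \autoref{theorem:oldformula}, express $\psi^{\ab}_{L/K}$ through \autoref{proposition:supremum} as a supremum of the functions attached to $\widehat{L_\pi}$ and the $\widehat{L_{\lambda,i}}$, and then fight for finiteness in $\lambda$ and control of the limit in $i$. But the two steps you defer with ``I would show'' --- that $\tilde g_\lambda = \sup_i g_{\widehat{L_{\lambda,i}}}$ is genuinely piecewise linear, and that only finitely many $\lambda$ contribute on $[0,\tilde t\,]$ --- are precisely the content of the theorem, and your proposal contains no argument for either; you say as much in your last sentence. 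The mechanism you do invoke, finiteness of the length of the torsion module so that ``only finitely many directions $dT_\lambda$ support wild ramification,'' is not a proof and is not what makes the argument work: for a \emph{single} character the refined Swan conductor does single out finitely many $\lambda$, but as $t$ ranges over $[0,\tilde t\,]$ you must control infinitely many characters, over all tame base changes $K'/K$, and nothing you say prevents the relevant finite set of directions from varying without bound, nor prevents $\tilde g_\lambda$ from being a strictly convex limit with infinitely many breaks (your proposed ``identification with a classical Hasse--Herbrand function'' is asserted, not constructed).

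The paper closes exactly this gap with an integrality argument that is absent from your proposal. The refined-Swan computations (\autoref{lemma:Lpi}, \autoref{lemma:Llambda}) show that at each $t_0$ either $\psi^{\ab}_{\widehat{L_\pi}/K}(t_0)=\psi^{\ab}_{L/K}(t_0)$, or $\psi^{\ab}_{\widehat{L_{\lambda,i}}/K}(t_0)/e(\widehat{L_{\lambda,i}}/L)=\psi^{\ab}_{L/K}(t_0)-1/e(\widehat{L_{\lambda,i}}/L)$ for some $\lambda$ and all large $i$; the decisive point is a quantitative dichotomy at points $t_0\in\Z_{(p)}$: applying \autoref{lemma:integrality} after a tame base change $K'/K$ with $e(K'/K)t_0\in\Z$, one shows that whenever the displayed equality fails for a given $\lambda$, it fails by at least $1/\bigl(e(K'/K)\,e(\widehat{L_{\lambda,i}}/L)\bigr)$. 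This definite gap, together with the boundedness of the slopes of the $\psi^{\ab}_{\widehat{L_{\lambda,i}}/K}$, produces an open interval around $t_0$ on which $f_i$ (the supremum of $\psi^{\ab}_{\widehat{L_\pi}/K}$ and the $\psi^{\ab}_{\widehat{L_{\lambda,i}}/K}/e(\widehat{L_{\lambda,i}}/L)$) is a maximum over a \emph{finite} subset of $\Lambda$, hence piecewise linear there; compactness of $[0,\tilde t\,]$ then makes $f_i$ piecewise linear globally. Finally, the limit in $i$ is tamed not by identifying $\tilde g_\lambda$ abstractly but by observing that on any interval where $f_i$ is linear and realized by a given $\lambda$ (or by $\pi$), $f_{i+1}$ is realized the same way, so $f_i$ and $f_{i+1}$ are linear on the same subintervals and differ by constants; since $f_i\le f_{i+1}$ and $\psi^{\ab}_{L/K}=\sup_i f_i$, linearity passes to the limit. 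Without the integrality step there is no uniform gap, the local finiteness fails to get off the ground, and your finite-maximum decomposition of $\psi^{\ab}_{L/K}|_{[0,\tilde t\,]}$ remains unproven.
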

\begin{proof}
	First observe that, from \autoref{theorem:oldformula}, there is $\tilde{t}\in \R_{\geq 0}$ sufficiently large such that $\psi_{L/K}^\ab$ is linear for $t>\tilde{t}$,
	so it is enough to prove that $\psi_{L/K}^\ab$ is piecewise linear on the closed interval $[0,\tilde{t}]$. 
	
	Let $T_\lambda$, $L_\pi$, and $L_{\lambda,i}$ be as in \autoref{lemma:Lpi} and \autoref{lemma:Llambda}.  Denote $\psi^{\ab}_\pi = \psi^{\ab}_{\widehat{L_\pi}/K}$ and $\psi_{\lambda,i}^{\ab} = \psi^{\ab}_{\widehat{L_{\lambda,i}}/K}/e(\widehat{L_{\lambda,i}}/L)$. Let 
	\[f_i = \sup\left(\{\psi_{\lambda,i}^{\ab}: \lambda \in \Lambda\}\cup \{\psi^{\ab}_\pi\}\right).\] 
	
	Here we go over an outline of the proof. The key idea is to show that,  for sufficiently large $i$, there is a cover of $\left[0,\tilde{t}\right]$ by open intervals such that, on each open interval, $f_i$ is piecewise linear, which, by compactness, 
	implies that $f_i$ is piecewise linear. 
	Further, we shall show that, on the subintervals where $f_i$ is linear, $f_{i+1}$ is also linear. From $f_i\leq f_{i+1}$, we shall deduce that $\psi_{L/K}^{\ab} = \sup_i \{f_i\}$ is piecewise linear. 
	
	We start with a refinement of the argument used in the proof of \autoref{proposition:supremum}. 	
	Let $\chi \in H^1(K', \Z/p^\Z)$ where $K'/K$ is a finite tame extension. For a field $M\supset K$, we shall denote $M'=MK'$. 
	Write \[\rsw \chi_{L'} = a d\log\pi_{L'} + \sum_{\lambda}b_\lambda dT_\lambda \in F_n\hat{\Omega}^1_L/F_{\lfloor \frac{n}{p}\rfloor} \hat{\Omega}^1_L,\]
	where $n=\sw \chi_{L'}$.  Using the same arguments from \autoref{lemma:Lpi}, \autoref{lemma:Llambda}, and \autoref{proposition:supremum}, we have that, if  $\sw \chi_{L'} = -v_{L'}(a)$,
	  \[\rsw \chi_{\widehat{L_\pi}'}= a d\log\pi_{L'}\]
	and, if  $\sw \chi_{L'} = -v_{L'}(b_{\lambda})$ and $i$ is sufficiently large,
	\[
	\rsw \chi_{\widehat{L_{\lambda,i}}'} =  b_\lambda \pi_{L'}^{e(L'/L)/m_i} d\log(\pi_{L'}^{1/m_i}).
	\]
	From this we get:
	\begin{enumerate}[(i)]
		\item If $\sw \chi_{L'} = -v_{L'}(a)$, then \[\sw \chi_{L'}=\sw \chi_{\widehat{L_{\pi}}'}.\]
		\item If  $\sw \chi_{L'} = -v_{L'}(b_{\lambda})$, and $i$ is sufficiently large, then
		\[
			\dfrac{\sw\chi_{\widehat{L_{\lambda,i}}'}}{e(\widehat{L_{\lambda,i}}'/L')} = \sw \chi_{L'} - \dfrac{e(L'/L)}{e(\widehat{L_{\lambda,i}}'/L')} 
		\]
		so
		\[
			\dfrac{1}{e(\widehat{L_{\lambda,i}}'/\widehat{L_{\lambda,i}})}\dfrac{\sw\chi_{\widehat{L_{\lambda,i}}'}}{e(\widehat{L_{\lambda,i}}/L)} = \dfrac{\sw \chi_{L'}}{e(L'/L)} - \dfrac{1}{e(\widehat{L_{\lambda,i}}/L)}. 
		\]
	\end{enumerate} 
	Therefore, for each $t_0\in \R_{\geq 0}$,  either 
	\[
	\dfrac{\psi^\ab_{\widehat{L_{\lambda,i}}/K}(t_0)}{e(\widehat{L_{\lambda,i}}/L)} = \psi^\ab_{L/K}(t_0) - \dfrac{1}{e(\widehat{L_{\lambda,i}}/L)} 
	\]
	for some $\lambda \in \Lambda$ and all sufficiently large $i$ or 
	\[
		\psi^\ab_{\widehat{L_{\pi}}/K}(t_0) = \psi^\ab_{L/K}(t_0). 
	\]
	Observe that these equalities can hold for only finitely many $\lambda \in \Lambda$. Further, for $t_0\in \Z_{(p)}$, $0<t_0< \tilde{t}$, let $K'/K$ be a finite tame extension such that $e(K'/K)t_0\in \Z$. If 
	\[
		\dfrac{\psi^\ab_{\widehat{L_{\lambda,i}}/K}(t_0)}{e(\widehat{L_{\lambda,i}}/L)} < \psi^\ab_{L/K}(t_0) - \dfrac{1}{e(\widehat{L_{\lambda,i}}/L)},
	\]
	then
	\[
		\psi^\ab_{\widehat{L_{\lambda,i}}'/K'}(e(K'/K)t_0) < e(\widehat{L_{\lambda,i}}/L) \psi^\ab_{L'/K'}(e(K'/K)t_0) - e(L'/L). 
	\]
	From \autoref{lemma:integrality},
	\[
		\psi^\ab_{\widehat{L_{\lambda,i}}'/K'}(e(K'/K)t_0) \leq e(\widehat{L_{\lambda,i}}/L) \psi^\ab_{L'/K'}(e(K'/K)t_0) - e(L'/L) -1, 
	\]
	so it follows that 
	\begin{align*}\dfrac{\psi^\ab_{\widehat{L_{\lambda,i}}/K}(t_0)}{e(\widehat{L_{\lambda,i}}/L)} & \leq \psi^\ab_{L/K}(t_0)- \dfrac{1}{e(\widehat{L_{\lambda,i}}/L)} - \dfrac{1}{e(L'/L)e(\widehat{L_{\lambda,i}}/L)}\\ & \leq \psi^\ab_{L/K}(t_0) - \dfrac{1}{e(\widehat{L_{\lambda,i}}/L)} - \dfrac{1}{e(K'/K)e(\widehat{L_{\lambda,i}}/L)}. \end{align*}
	Since the slopes of $\psi^\ab_{\widehat{L_{\lambda,i}}/K}$ are bounded, it follows that there exists an open interval containing $t_0$ such that, for sufficiently large $i$, 
	\[f_i = \max\left(\{\psi_{\lambda,i}^{\ab}: \lambda \in \Lambda'\}\cup \{\psi^{\ab}_\pi\}\right)\]
	on this interval, where $\Lambda' \subset \Lambda$ is finite. Therefore $f_i$ is piecewise linear on this interval. By compacteness, for sufficiently large $i$,   $f_i$ is piecewise linear on $\left[0,\tilde{t}\right]$. 
	
	To conclude the argument observe that, on an interval where 
	\[
	f_i(t)= \dfrac{\psi^\ab_{\widehat{L_{\lambda,i}}/K}(t)}{e(\widehat{L_{\lambda,i}}/L)} = \psi^\ab_{L/K}(t) - \dfrac{1}{e(\widehat{L_{\lambda,i}}/L)},
	\]
	(respectively, $f_i(t)=\psi^\ab_{\widehat{L_{\pi}}/K}(t) = \psi^\ab_{L/K}(t)$), we also have 
	\[
	\dfrac{\psi^\ab_{\widehat{L_{\lambda,i+1}}/K}(t)}{e(\widehat{L_{\lambda,i+1}}/L)} = \psi^\ab_{L/K}(t) - \dfrac{1}{e(\widehat{L_{\lambda,i+1}}/L)} 
	\]
	(respectively, $f_{i+1}(t)=\psi^\ab_{\widehat{L_{\pi}}/K}(t) = \psi^\ab_{L/K}(t)$)
	for every $\gamma\in\Lambda$.
	On intervals where $f_i(t)= \psi^\ab_{L/K}(t) - 1/e(\widehat{L_{\lambda,i}}/L)$, we have that $\psi_{\lambda,i}^{\ab}$ and $\psi_{\lambda,i+1}^{\ab}$ are linear on the same subintervals, so $f_i$ and $f_{i+1}$ are linear on the same subintervals. From $f_i\leq f_{i+1}$, we get the result.
\end{proof}

\begin{theorem}	\label{theorem:integrality} Let $L/K$ be an extension of complete discrete valuation fields of positive characteristic. Assume that the residue field of $K$ is perfect.	Then: \begin{enumerate}[(i)]
		\item For $t\in \Z_{\geq 0}$, we have $\psi_{L/K}^{\ab}(t)\in\Z_{\geq0}$.	
		\item For  $t\in\Q_{\geq0}$, we have $\psi_{L/K}^{\ab}(t)\in\Q_{\geq0}$. 
		\item The right and left derivatives of $\psi_{L/K}^{\ab}$ are integer-valued.
	\end{enumerate} 
\end{theorem}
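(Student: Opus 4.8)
The first statement (i) is precisely \autoref{lemma:integrality}, so it requires nothing new. The engine for (ii) and (iii) will be the behaviour of $\psi^{\ab}_{L/K}$ under tame base change. Concretely, I will use the formula
\[
\psi^{\ab}_{L'/K'}\bigl(e(K'/K)\,t\bigr) = e(L'/L)\,\psi^{\ab}_{L/K}(t), \qquad L' = LK',
\]
valid for every finite tame extension $K'/K$. This follows from \autoref{proposition:supremum} together with the identity $\psi^{\ab}_{M/K'}(e(K'/K)t) = \psi^{\ab}_{M/K}(t)$ for $M \in S$ (a consequence of \autoref{lemma:transitivity} applied to $M/K'/K$ and the tame computation $\psi_{K'/K}(t) = e(K'/K)t$), which is exactly the middle equality already appearing inside the proof of \autoref{proposition:supremum}. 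The point is that for a prime $\ell \neq p$ with $\ell \nmid e(L/K)$ and $K' = K(\pi_K^{1/\ell^r})$ one has $e(K'/K) = \ell^r = e(L'/L)$, while the residue field of $K'$ stays equal to $k$ and hence perfect, so that \autoref{lemma:integrality} applies to $L'/K'$ as well.

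For (iii), recall from \autoref{theorem:piecewiselinear} that $\psi^{\ab}_{L/K}$ is piecewise linear; fix $t_0$ and let $r$ be the slope of the linear piece immediately to the right of $t_0$ (the left derivative is handled identically). Choose $e' = e(K'/K)$ as above, coprime to $p$ and to $e(L/K)$ and large enough that $[\tfrac{m}{e'}, \tfrac{m+1}{e'}]$ is contained in that linear piece for some integer $m$ with $t_0 \le \tfrac{m}{e'}$. Since $e(L'/L) = e'$, the displayed formula gives $\psi^{\ab}_{L/K}(\tfrac{j}{e'}) = \tfrac{1}{e'}\,\psi^{\ab}_{L'/K'}(j)$ for all integers $j$, whence
\[
r = e'\Bigl(\psi^{\ab}_{L/K}\bigl(\tfrac{m+1}{e'}\bigr) - \psi^{\ab}_{L/K}\bigl(\tfrac{m}{e'}\bigr)\Bigr) = \psi^{\ab}_{L'/K'}(m+1) - \psi^{\ab}_{L'/K'}(m).
\]
By \autoref{lemma:integrality} applied to $L'/K'$, both terms on the right lie in $\Z_{\ge 0}$, so $r \in \Z$. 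This settles (iii), and in particular the awkward $p$-power denominators that could a priori arise on the ``imperfection'' pieces of \autoref{theorem:piecewiselinear} never occur, because the tame base change has cleared the slope into a difference of integer values.

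Statement (ii) then follows from (iii). The same formula shows that $\psi^{\ab}_{L/K}$ takes values in $\tfrac1{e'}\Z \subset \Q$ at every point of the form $\tfrac{j}{e'}$, and such points (with $e'$ coprime to $p$) are dense in $[0,\infty)$. Given $t \in \Q_{\ge 0}$, choose such a point $t_1$ lying in the same linear piece as $t$; then $\psi^{\ab}_{L/K}(t) = \psi^{\ab}_{L/K}(t_1) + r\,(t-t_1)$ with $\psi^{\ab}_{L/K}(t_1) \in \Q$, $r \in \Z$ by (iii), and $t - t_1 \in \Q$, so $\psi^{\ab}_{L/K}(t) \in \Q_{\ge 0}$.

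The main obstacle is the rigorous justification of the tame base-change formula as an \emph{equality}, rather than the single inequality recorded in \autoref{proposition:supremum}. The inequality $\psi^{\ab}_{L'/K'}(e'\,t) \le e'\,\psi^{\ab}_{L/K}(t)$ is immediate from that proof; the reverse requires showing that the supremum defining $\psi^{\ab}_{L/K}(t)$ over $S$ is already attained over the cofinal subfamily $\{MK' : M \in S\} \subseteq S'$. This is where the tameness of $K'/K$ enters: for $M \in S$ with $e'$ coprime to $e(M/K)$ one has $e(MK'/M) = e'$, and \autoref{lemma:transitivity} gives $\psi^{\ab}_{MK'/K}(t) = e'\,\psi^{\ab}_{M/K}(t)$, so the ratios $\psi^{\ab}_{MK'/K}(t)/e(MK'/L)$ and $\psi^{\ab}_{M/K}(t)/e(M/L)$ coincide. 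I expect the only real care to be in choosing $\ell$ (hence $e'$) avoiding the ramification indices of the finitely many $M$ needed to approximate the supremum, after which the two suprema agree and the equality follows.
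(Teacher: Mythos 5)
Your proposal is correct, and for parts (ii) and (iii) it takes a genuinely different route from the paper's. The paper does not isolate a tame base-change identity as a single tool; instead it re-enters the fine structure of the proof of \autoref{theorem:piecewiselinear}: on each linear piece either $\psi^{\ab}_{L/K}=\psi^{\ab}_{\pi}$ (slope an integer, since $\psi^{\ab}_{\pi}$ is a classical Hasse--Herbrand function by \autoref{lemma:perfectpsi}), or $\psi^{\ab}_{L/K}=\psi^{\ab}_{\lambda,i}+1/p^i=\psi^{\ab}_{\widehat{L_{\lambda,i}}/K}/p^i+1/p^i$, in which case the paper shows the slope $a$ of $\psi^{\ab}_{\widehat{L_{\lambda,i}}/K}$ is divisible by $p^i$ by base-changing along a totally tamely ramified $K'/K$ of degree $e$ and evaluating at two points $t_1=p^im_1/e$ and $t_2=m_2/e$ with $(m_2,p)=1$, applying \autoref{lemma:integrality} to $L'/K'$ at both; part (ii) is then read off from the rational constant terms of $\psi^{\ab}_{\lambda,i}$ and $\psi^{\ab}_{\pi}$. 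Your argument replaces all of this $p$-power bookkeeping by the identity $\psi^{\ab}_{L'/K'}(e(K'/K)t)=e(L'/L)\psi^{\ab}_{L/K}(t)$, after which the slope on a linear piece is literally a difference $\psi^{\ab}_{L'/K'}(m+1)-\psi^{\ab}_{L'/K'}(m)$ of non-negative integers; this is cleaner and uses \autoref{theorem:piecewiselinear} only as a black box. What you lose is having to justify the base-change identity as an equality, but you are on the same footing as the paper there: it is used silently as the middle equality in the proofs of \autoref{proposition:supremum} and \autoref{lemma:integrality}, and again as the first display in the paper's own proof of this theorem. Two small repairs to your write-up. First, citing \autoref{lemma:transitivity} for $M/K'/K$ with $M\in S'$ is not quite licit, since that lemma requires the top extension to be finite while $M/K'$ is typically infinite; the correct reduction is via \autoref{lemma:perfectpsi}, which converts $\psi^{\ab}_{M/K}$ and $\psi^{\ab}_{M/K'}$ into classical $\psi$-functions of the finite extensions $M/M_0$ and $M/M_0K'$, whereupon classical transitivity and $\psi_{M_0K'/M_0}(t)=e(K'/K)t$ give the identity. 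Second, your worry about choosing $\ell$ prime to the ramification indices of the $M$ approximating the supremum is unnecessary: for any $M\in S$ the extension $MK'/M$ is tame, so $\psi^{\ab}_{MK'/K}(t)=e(MK'/M)\,\psi^{\ab}_{M/K}(t)$ by \autoref{lemma:transitivity} (here $MK'/M$ \emph{is} finite), and the factor $e(MK'/M)$ cancels in the ratio $\psi^{\ab}_{MK'/K}(t)/e(MK'/L)$ whether or not $e(MK'/M)=e(K'/K)$; coprimality of $e(K'/K)$ with $e(L/K)$ is only needed to ensure $e(L'/L)=e(K'/K)$ in your slope computation.
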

\begin{proof}
	(i) is given by \autoref{lemma:integrality}. To prove (iii), it is sufficient to show that the slopes of $\psi_{L/K}^\ab$ on intervals where $\psi_{L/K}^\ab$ is linear are integers. Let $I$ be an open interval where $\psi_{L/K}^\ab$ is linear and $\psi_{L/K}^\ab(t)= \psi^\ab_{\lambda,i}(t) + 1/p^i$ for $i$ sufficiently large or   $\psi_{L/K}^\ab(t)= \psi^\ab_{\pi}(t)$, in the notation of \autoref{theorem:piecewiselinear}. If
	 $\psi_{L/K}^\ab(t)= \psi^\ab_{\pi}(t)$, then the slope of  $\psi_{L/K}^\ab$ is an integer on this interval, so there is nothing to prove. On the other hand, if 
	 \[
	    \psi_{L/K}^\ab(t)= \psi^\ab_{\lambda,i}(t) + 1/p^i = \dfrac{\psi^\ab_{\widehat{L_{\lambda,i}}/K}(t)}{p^i} + \dfrac{1}{p^i},
	 \]
	 then it is sufficient to prove that the slope of $\psi^\ab_{\widehat{L_{\lambda,i}}/K}(t)$ on this interval is an integer divisible by $p^i$.  
	 
	 For $e\in \Z$ sufficiently large and prime to $p$, we can find $t_1, t_2\in I$ such that $t_1 = \frac{p^im_1}{e}$, where $m_1\in \Z$, and $t_2 =   \frac{m_2}{e}$, where $m_2\in \Z$ and $(m_2,p)=1$. Let $K'/K$ be a finite, totally tamely ramified extension with ramification index $e$, and write $L'=LK'$, $\widehat{L_{\lambda,i}}'=\widehat{L_{\lambda,i}}K'$. Then, for $t\in I$, we have 
	 \[
		  \dfrac{\psi_{L'/K'}^\ab(et)}{e(L'/L)}= \dfrac{\psi^\ab_{\widehat{L_{\lambda,i}}'/K'}(et)}{e(L'/L)p^i} + \dfrac{1}{p^i},
	 \]  
	 so
	 \[
		 \psi_{L'/K'}^\ab(et)= \dfrac{1}{p^i}\left(\psi^\ab_{\widehat{L_{\lambda,i}}'/K'}(et) + e(L'/L)\right).
	 \]  
	 For $t\in I$,  
	 \[
		 \psi^\ab_{\widehat{L_{\lambda,i}}'/K'}(et) = a et +b
	 \]
	 for some $a\in \Z, b\in \Q$. We will show that $p^i\mid a$. From (i),
	 \[
		 \Z \ni \psi_{L'/K'}^\ab(et_1)= \dfrac{1}{p^i}\left(aet_1+b + e(L'/L)\right)=am_1+\dfrac{b+e(L'/L)}{p^i},
	 \]  
	 so $\frac{b+e(L'/L)}{p^i}\in \Z$. In the other hand, 
	 \[
		  \Z \ni \psi_{L'/K'}^\ab(et_2)= \dfrac{1}{p^i}\left(am_2+b + e(L'/L)\right)=\dfrac{am_2}{p^i}+\dfrac{b+e(L'/L)}{p^i},
	 \]  
	 so $\frac{am_2}{p^i}\in \Z$. Since $(m_2, p)=1$, we have $p^i\mid a$. Further, for $t\in I$, 
	 \[
		 \psi_{L/K}^\ab(t)= \dfrac{\psi_{L'/K'}^\ab(et)}{e(L'/L)} = \dfrac{aet+b + e(L'/L)}{p^i e(L'/L)}.
	 \]
	 Since $p^i\mid a$, $e(L'/L)$ is prime to $p$, and 
	 \[
	 \psi_{L/K}^\ab(t)= \dfrac{\psi^\ab_{\widehat{L_{\lambda,i}}/K}(t)}{p^i} + \dfrac{1}{p^i},
	 \]
	  we have that $\frac{ae}{p^i e(L'/L)} \in \Z$. Thus the slope of $\psi_{L/K}^\ab$ on this interval is an integer. Finally, to prove (ii), consider again an interval $I$ where  $\psi_{L/K}^\ab$ is linear and  $\psi_{L/K}^\ab(t)= \psi^\ab_{\lambda,i}(t) + 1/p^i$ or   $\psi_{L/K}^\ab(t)= \psi^\ab_{\pi}(t)$. From the fact that $\psi^\ab_{\lambda,i}(t)$ and  $\psi^\ab_{\pi}(t)$ have rational constant terms, the result follows. 
\end{proof}

\section{\texorpdfstring{Computation of $\psi_{L/K}^{\ab}$ for basic cases}{Computation of psi_{L/K}^ab for basic cases}}

Through this section, $K$ shall denote a complete discrete valuation field of  characteristic $p>0$ with perfect residue field, and $L_0/K$ an extension of complete discrete valuation fields such that $e(L_0/K)=1$. The residue field of $L_0$ is not assumed to be perfect.

In this section we will compute  $\psi_{L/K}^{\ab}$ in some cases.  
For a separable extension of complete discrete valuation fields $L/K$, and $a\in K$, define 
\[
\delta_{L/K}(a) := -v^{\log}_{K}(da) +\dfrac{v^{\log}_{L}(da)}{e(L/K)}.
\] 
Intuitively, we can see $\delta_{L/K}(a)$ as $\delta_{L/K}(a)=\pole_K(da) -\pole_L(da)$. Whenever there is no ambiguity, we write simply $\delta(a)$.

\subsection{Tamely ramified extensions}

In this subsection, we consider the simplest case, that of an extension of complete discrete valuation fields $L/K$ such that $e(L/K)$ is prime to $p$ (we say that such extension is tamely ramified). More generally, we consider extensions satisfying $\delta_\tor(L/K)=0$. In this case, from the formula obtained in \autoref{theorem:oldformula}, we have 
\[\psi_{L/K}^\ab(t)=e(L/K)t. \]

The formula for $\psi_{L/K}^\ab(t)$ for a tamely ramified extension of complete discrete valuation fields $L/K$ is an immediate consequence of the fact that $\delta_\tor(L/K)=0$ in this case. To show that $\delta_\tor(L/K)=0$, denote $e(L/K)=e$. We have $\pi_K = u\pi^e_L$ for some $u\in U_L$, and \begin{align*}\delta_\tor(L/K)&= v_L^\log\left(\frac{d\pi_K}{\pi_K}\right)\\&= v_L^\log\left(\frac{du}{u} + e\frac{d\pi_L}{\pi_L}\right)\\&=0.\end{align*}
Therefore, $\delta_\tor(L/K)=0$, so it immediately follows that  
\[\psi_{L/K}^\ab(t)=e(L/K)t. \]
\subsection{\texorpdfstring{Totally ramified cyclic extension $L/L_0$ of degree $p$}{Totally ramified cyclic extension L/L\_0 of degree p}}

In this subsection, $L/L_0$ denotes a  separable, totally ramified cyclic extension of degree $p$. Recall that, when $K_1/K$ is a separable, totally ramified cyclic extension of degree $p$, the classical Hasse-Herbrand $\psi$-function takes the following form (\cite[Chapter V, \S 3]{serre1979local}):
\[
\psi_{K_1/K}(t) = \begin{cases}
t, & t \leq \dfrac{\delta_{\tor}(K_1/K)}{p-1} \\ pt - \delta_\tor(K_1/K), & t> \dfrac{\delta_{\tor}(K_1/K)}{p-1}
\end{cases}\]
 We will show that  $\psi^{\ab}_{L/K}$ is similar to $\psi_{K_1/K}$.

\begin{lemma} \label{lemma:degreep}Let $\chi \in H^1(K, \Z/p\Z)$, and $a\in K$ correspond to $\chi$ under Artin-Schreier-Witt theory. Assume that $a$ is best in $K$. Then  
	\[
	\sw \chi_L = \begin{cases} \sw \chi, &   - v^{\log}_L(da) <  p  \dfrac{\delta(a)}{p-1}\\
	p(\sw \chi - \delta(a)), & - v^{\log}_L(da) >  p  \dfrac{\delta(a)}{p-1}
	\end{cases}
	\]
\end{lemma}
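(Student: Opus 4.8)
The plan is to reduce everything to the Artin--Schreier extension $L/L_0$, compute $v_L^{\log}(da)$ directly, and read off $\sw\chi_L$ from \autoref{prop:besta}. Since $a$ is best in $K$ and corresponds to $\chi$, the map $d\colon W_1(K)\to\hat\Omega^1_K$ of \autoref{prop:besta} sends $a$ to $da$, so $\rsw\chi$ is the class of $da=\alpha\,d\log\pi_K$ with $v_K(\alpha)=-\sw\chi$; there are no $dT_\lambda$-terms because $k$ is perfect, and $\sw\chi=-v^{\log}_K(da)$ with $p\nmid\sw\chi$. Because $e(L_0/K)=1$ one checks $\delta_{L_0/K}(a)=0$, whence $\delta(a)=\delta_{L/L_0}(a)$, and $v_{L_0}(a)=v_K(a)=-\sw\chi$. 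Rearranging the definition of $\delta(a)$ with $e(L/K)=p$ gives, with no further work,
\[
-v_L^{\log}(da)=p\bigl(\sw\chi-\delta(a)\bigr),
\]
and an elementary manipulation turns the two hypotheses into $-v_L^{\log}(da)\gtrless\sw\chi$, equivalently $\sw\chi\gtrless\frac{p\delta(a)}{p-1}$. It is also worth recording that $\frac{p\delta(a)}{p-1}$ equals the break $h$ of $L/L_0$ (so that $v_{L_0}(c)=-h$ below): since $da=\alpha\,d\log\pi_K$ one has $\delta(a)=\tfrac1p v_L^{\log}(d\log\pi_K)$, which is $\tfrac1p$ of the logarithmic different $(p-1)h$ of the degree-$p$ extension.

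For the second case, $-v_L^{\log}(da)>\frac{p\delta(a)}{p-1}$: here $v_L(a)=-p\sw\chi$, so $\chi_L\in F_{p\sw\chi}H^1(L,\Z/p\Z)$, and by \autoref{prop:besta}(i) its image in $F_{p\sw\chi}\hat\Omega^1_L/F_{\sw\chi}\hat\Omega^1_L$ (note $\lfloor p\sw\chi/p\rfloor=\sw\chi$) is the class of $da$. In this case $-v_L^{\log}(da)>\sw\chi$, so that class is nonzero, and the final clause of \autoref{prop:besta}(ii) yields at once $\sw\chi_L=-v_L^{\log}(da)=p(\sw\chi-\delta(a))$.

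For the first case, $-v_L^{\log}(da)<\frac{p\delta(a)}{p-1}$, the inequality $\sw\chi_L\le\sw\chi$ is just as clean: now $-v_L^{\log}(da)<\sw\chi$, so $da\in F_{\sw\chi}\hat\Omega^1_L$ and its class in $F_{p\sw\chi}\hat\Omega^1_L/F_{\sw\chi}\hat\Omega^1_L$ vanishes; by the injectivity in \autoref{prop:besta}(ii) together with part (i), $\chi_L\in F_{\sw\chi}H^1(L,\Z/p\Z)$, i.e. $\sw\chi_L\le\sw\chi$. The real content is the reverse inequality $\sw\chi_L\ge\sw\chi$, which I expect to be the main obstacle.

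To prove $\sw\chi_L\ge\sw\chi$ I would show that no Artin--Schreier reduction lowers the pole of $a$ below order $\sw\chi$ in $L$. Write $L=L_0(\theta)$, $\theta^p-\theta=c$, $v_{L_0}(c)=-h$, and $L=\bigoplus_{i=0}^{p-1}L_0\theta^i$. Suppose for contradiction $\chi_L\in F_{\sw\chi-1}H^1(L)$, so $a=(F-1)b+a''$ with $v_L(a'')\ge-(\sw\chi-1)$, forcing $v_L(b)=-\sw\chi$. Writing $b=\sum_i b_i\theta^i$ and $b^p=\sum_i b_i^p(\theta+c)^i$, I would match components in the basis $\{\theta^i\}$: since $a\in L_0$, each $\theta^i$-component with $i\ge1$ of $(F-1)b$ must be absorbed into $a''$, i.e. have $v_L\ge-(\sw\chi-1)$, which constrains $b_{p-1},\dots,b_1$. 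The crucial mechanism is that expanding $(\theta+c)^i$ feeds each $b_i$ into the lower $\theta^j$-components multiplied by powers $c^{i-j}$ of valuation $-(i-j)ph$; because $v_L(c)=-ph$ is far deeper than the entire available pole order $p\,\sw\chi<ph$ (this is precisely the hypothesis $\sw\chi<h$), clearing the higher components forces the $b_i$ with $i\ge1$ to be negligible, so the leading pole of $(F-1)b$ must come from $b_0^p$ alone, requiring $v_{L_0}(b_0)=-\sw\chi/p$ --- impossible since $p\nmid\sw\chi$. The delicate part, and the crux of the lemma, is this descent through the $\theta^i$-components and the precise way the inequality $\sw\chi<h$ bounds it; granting it, the two inequalities combine to give $\sw\chi_L=\sw\chi$.
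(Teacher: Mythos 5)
The parts of your argument that run through the refined Swan conductor are correct, and they constitute a genuinely different route from the paper's for those steps: the identity $-v_L^{\log}(da)=p(\sw\chi-\delta(a))$, the translation of the two hypotheses into $-v_L^{\log}(da)\lessgtr\sw\chi$, all of the second case, and the bound $\sw\chi_L\le\sw\chi$ in the first case via the injectivity in \autoref{prop:besta}(ii) are all fine (modulo the degenerate case $v_K(a)\ge 0$, which you skip and the paper dispatches in one line). The gap is in the lower bound, exactly where you place it, but it begins earlier than you think: the identity you ``record'', $\frac{p\delta(a)}{p-1}=h$, is false in general. Take $K=\mathbb{F}_p((t))$, $l=\mathbb{F}_p(T)$, $L_0=l((t))$, and $\theta^p-\theta=c=Tt^{-h}$ with $p\nmid h$: this $L/L_0$ is separable, cyclic, totally ramified of degree $p$ with break $h$, but from $d\theta=-dc=-t^{-h}\,dT+hTt^{-h}\,d\log t$ one computes $d\log t=B^{-1}\,d\log\pi_L+(\text{unit})\,dT$ with $v_L(B^{-1})=(p-1)h$, hence $v_L^{\log}(d\log t)=0$ and $\delta(a)=\frac1p v_L^{\log}(d\log\pi_K)=0$ for \emph{every} $a\in K$. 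Consistently, every best $a\in K$ stays best in $L$ (its leading coefficient acquires the residue $T^{ns}\notin l^p$), so $\sw\chi_L=p\sw\chi$ even when $\sw\chi<h$; your reading of case 1 as ``$\sw\chi<h$'' would predict $\sw\chi_L=\sw\chi$ here, which is wrong. The error is that with imperfect residue field $d\log\pi_K$ can have a unit component along some $dT_\lambda$, so its log-valuation does not compute the logarithmic different of $L/L_0$. What your descent actually needs is only the inequality $\frac{p\delta(a)}{p-1}\le h$, which is true (the $dT_\lambda$-components can only push $v_L^{\log}(d\log\pi_K)$ below $(p-1)h$), but it is a statement requiring proof, and your proposal neither isolates nor proves it.

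Second, the descent for $\sw\chi_L\ge\sw\chi$ is, as you concede, only granted, and the mechanism you describe is not the one that closes it. Write $n=\sw\chi$ and $b=\sum_i b_i\theta^i$ with $v_L(b)=-n$. Since the values $v_L(b_i\theta^i)\equiv -ih\pmod p$ are pairwise distinct, the minimum is attained at the unique $j_0$ with $j_0h\equiv n\pmod p$, and $j_0\ge 1$ because $p\nmid n$; moreover the leading pole of $b^p$ is carried by $b_{j_0}^pc^{j_0}$, which lies in the $\theta^0$-component, so it is not true that the pole ``must come from $b_0^p$ alone'', and the $\theta^0$-equation yields no contradiction. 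The contradiction lives in the $\theta^{j_0}$-component: $n<h$ forces $v_{L_0}(b_{j_0})=\frac{j_0h-n}{p}>0$, so there $-b_{j_0}$ strictly dominates both $b_{j_0}^p$ and every term $\binom{i}{j_0}b_i^pc^{i-j_0}$ with $i>j_0$, forcing the corresponding component of $a''$ to have $v_L$ exactly $-n$, against $v_L(a'')>-n$. So your outline can be completed, but the two missing pieces (the inequality above and this component analysis) are the actual content of the case. For contrast, the paper needs neither: it iterates Artin--Schreier reduction inside $L$ to write $a=b^p+c$ with $c$ best in $L$, so that $b+c$ is a best representative of $\chi_L$ with $v_L(b)=v_K(a)$ and $v_L(c)=v_L^{\log}(dc)=v_L^{\log}(da)$; the dichotomy $v_L(b)\lessgtr v_L(c)$ is then literally the dichotomy of the lemma, and reading off $\sw\chi_L=-v_L(b+c)$ settles both cases, your hard lower bound included, in one stroke.
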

\begin{proof}

	If $v_K(a)\geq 0$ the result is clear, so assume $v_K(a)<0$. Since $a\in K$ is best, we have that $\sw \chi = - v_K(a)$. Further, $p\nmid v_K(a)$, and $a$ is also best in $L_0$, so \[\sw \chi_{L_0}= - v_{L_0}(a)=-v^{\log}_{L_0}(da).\]

	If $a$ is best in $L$, then $\delta(a)=0$ and $\sw \chi_L = p \sw \chi$, so the formula holds. Assume that $a$ is not best in $L$. Write \[a= b_0^p - b_0 + a_0,\] where $a_0$ is best in $L$ (recall that we have $v_L(a)< v_L(a_0)$), and put $c_0= -b_0+ a_0$. We have \[a = b_0^p +c_0.\] 
	
	If $c_0$ is best in $L$, we put $b=b_0$ and $c=c_0$ to write $a=b^p+c$ with $c$ best. If not,  observe that $v_L(a)< \min \{v_L(b_0),v_L(a_0) \}\leq v_L(c_0)$. Repeating the argument, write $c_0 = b_1^p + c_1$ with $v_L(c_0) < v_L(c_1)$, so that $a= (b_0+b_1)^p+c_1$. By induction on $v_L(c_i)$, this process eventually stops, so we can write 
	$$a=b^p + c$$
	for some $b, c\in L$ with $c$ best.

	Observe that $v_L(b) = p^{-1}v_L(a)=v_K(a)$, and, since $da=dc$ and $c$ is best in $L$, we have $v^{\log}_L(da)=v_L^{\log}(dc)=v_L(c)$. Consider the following two cases:
	\begin{enumerate}[(i)]
		\item  Assume $v_L(b)<v_L(c)$. This condition can be rewritten as $v_K(a)<v_L^{\log}(da)$, or 
		\[-\dfrac{p-1}{p} v^{\log}_L(da)<\delta(a).\]

		Write $a= b^p - b + (b+c)$. We claim that $b$ is best. Since $a\in K \subset L_0$ is best in $K$, $p\nmid v_K(a)=v_{L_0}(a)$. Since the extension is of degree $p$ and totally ramified, $v_L(a)= p v_K(a)$, so $p\mid v_L(a)$ but $p^2 \nmid v_L(a)$. Recall that $v_L(a) = p v_L(b)$. 		
		If $b$ were not best, then we would have $p\mid v_L(b)$. But then we would have $p^2\mid v_L(a)$, a contradiction. 
	
		Then $b+c$ is best, and $\sw \chi_L = - v_L(b+c) = - v_L(b) = - v_K(a)$,  so \[\sw \chi_L= \sw \chi.\]

		\item Assume $v_L(b)>v_L(c)$. This condition can be rewritten as  \[-\dfrac{p-1}{p} v^{\log}_L(da)>\delta(a).\]
		
		In this case, we have $a = b^p -b +(b+c)$, with $b+c$ best and\[v_L(b+c) = v_L(c) = v_L^{\log}(da) = p(\delta(a)+v_L(a)),\] so \[\sw \chi_L  = p \sw \chi - p\delta(a). \qedhere\]  
		\end{enumerate}
\end{proof}

\begin{remark}
	Observe that, for $a$ as in \autoref{lemma:degreep},   if $- v_K(a) <    \dfrac{\delta(a)}{p-1}$, then we also have 
	$- v^\log_L(da) <  p  \dfrac{\delta(a)}{p-1}$.
\end{remark}

\begin{proposition} \label{proposition:degreep} Assume that $\delta(L/K)\neq 0$. Then there exists a finite, tame extension $K'/K$ and $\chi_{K'}\in H^1(K', \Z/p\Z)$ such that \[\sw \chi_{L'} = \sw \chi_{K'}>0,\] where $L'=LK'$. Further, $\chi_{K'}$ can be taken to satisfy  $0<\sw \chi_{K'}<\frac{\delta_{L'/K'}(a)}{p-1}$, where $\tilde{a}\in K'$ is best and corresponds to $\chi_{K'}$.   
\end{proposition}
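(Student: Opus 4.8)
The plan is to deduce both assertions from a single, stronger one: I will produce a tame extension $K'/K$ and a best $\tilde a\in K'$ with $0<-v_{K'}(\tilde a)<\delta_{L'/K'}(\tilde a)/(p-1)$. Indeed, if $\chi_{K'}$ is the Artin--Schreier class of such a $\tilde a$, then $\sw\chi_{K'}=-v_{K'}(\tilde a)>0$, and the Remark following \autoref{lemma:degreep} converts the inequality $-v_{K'}(\tilde a)<\delta_{L'/K'}(\tilde a)/(p-1)$ into $-v_{L'}^{\log}(d\tilde a)<p\,\delta_{L'/K'}(\tilde a)/(p-1)$. We are then in the first case of \autoref{lemma:degreep}, applied to $L'/K'$, which satisfies its hypotheses since $L'/L_0'$ (with $L_0'=L_0K'$) is again separable, totally ramified and cyclic of degree $p$ and $e(L_0'/K')=1$, giving $\sw\chi_{L'}=\sw\chi_{K'}>0$.

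The key point is that $\delta_{L/K}(\tilde a)$ does not depend on the choice of best $\tilde a$. Since the residue field of $K$ is perfect, $\hat{\Omega}^1_{\OO_K}(\log)$ is free of rank one on $d\log\pi_K$, so for a best $\tilde a\in K$ with $v_K(\tilde a)<0$ I may write $d\tilde a=g\,d\log\pi_K$ with $v_K(g)=v_K^{\log}(d\tilde a)=v_K(\tilde a)$. Scaling by the field element $g$ gives $v_L^{\log}(d\tilde a)=v_L(g)+v_L^{\log}(d\log\pi_K)=p\,v_K(\tilde a)+v_L^{\log}(d\log\pi_K)$, using $v_L(g)=e(L/K)v_K(g)=p\,v_K(\tilde a)$, so that
\[
\delta_{L/K}(\tilde a)=-v_K(\tilde a)+\frac{v_L^{\log}(d\tilde a)}{p}=\frac{v_L^{\log}(d\log\pi_K)}{p}=\frac{\delta_\tor(L/K)}{p},
\]
where the last equality is the identity $\delta_\tor(L/K)=v_L^{\log}(d\log\pi_K)$ recorded at the start of the tamely ramified case above. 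In particular the hypothesis $\delta(L/K)\neq0$ gives $\delta_\tor(L/K)\geq1$.

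Now I pass to a totally tamely ramified $K'=K(\pi_K^{1/e})$ with $e$ prime to $p$, and set $L'=LK'$. Because $L'/L$ is tame, $\hat{\Omega}^1_{\OO_{L'}}(\log)=\OO_{L'}\otimes_{\OO_L}\hat{\Omega}^1_{\OO_L}(\log)$ (equivalently $\delta_\tor(L'/L)=0$), so $v_{L'}^{\log}$ multiplies the $\log$-valuation of any form defined over $L$ by $e(L'/L)=e$; combined with $d\log\pi_{K'}=e^{-1}d\log\pi_K$ this yields $v_{L'}^{\log}(d\log\pi_{K'})=e\,v_L^{\log}(d\log\pi_K)$, and hence, by the previous paragraph applied over $K'$,
\[
\delta_{L'/K'}(\tilde a)=\frac{e\,\delta_\tor(L/K)}{p}
\]
for every best $\tilde a\in K'$ with $v_{K'}(\tilde a)<0$. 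Choosing $e$ prime to $p$ with $e\,\delta_\tor(L/K)>p(p-1)$ and taking $\tilde a=\pi_{K'}^{-1}$, which is best with $-v_{K'}(\tilde a)=1$, we obtain
\[
0<-v_{K'}(\tilde a)=1<\frac{e\,\delta_\tor(L/K)}{p(p-1)}=\frac{\delta_{L'/K'}(\tilde a)}{p-1},
\]
which is exactly the stronger inequality, completing the proof.

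The routine computations aside, the step that demands the most care is the scaling $v_{L'}^{\log}(d\log\pi_K)=e\,v_L^{\log}(d\log\pi_K)$ under tame base change: it is what allows a fixed small Swan conductor to sit below the $\delta/(p-1)$ threshold, and it hinges on the fact that a tame extension introduces no new torsion in $\hat{\Omega}^1(\log)$, so that the $\log$-valuation of a form over $L$ simply scales by the ramification index.
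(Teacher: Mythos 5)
Your proof is correct, and it takes a genuinely different route from the paper's. The paper fixes one best $a\in K$ with $v_K(a)<0$ and $\delta_{L/K}(a)\neq 0$ and, when $-v_K(a)$ exceeds the threshold $\delta_{L/K}(a)/(p-1)$, \emph{lowers the conductor}: over $K'=K(\sqrt[e]{\pi_K})$ it replaces $a$ by the twist $\tilde a=\pi_{K'}^{p^r}a$, exploiting that $d(\pi_{K'}^{p^r}a)=\pi_{K'}^{p^r}\,da$ in characteristic $p$, so that $\delta_{L'/K'}(\tilde a)=\delta_{L'/K'}(a)$ while $-v_{K'}(\tilde a)=-ev_K(a)-p^r$ is pushed into the window $\left(0,\delta_{L'/K'}(\tilde a)/(p-1)\right)$. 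You instead prove a structural fact the paper never states: since $k$ is perfect, $\delta_{L/K}(a)=v_L^{\log}(d\log\pi_K)/p=\delta_\tor(L/K)/p$ for \emph{every} best $a$ with $v_K(a)<0$, independently of $a$; you then \emph{raise the threshold} rather than lower the conductor, choosing $e$ prime to $p$ with $e\,\delta_\tor(L/K)>p(p-1)$ and taking the minimal character $\tilde a=\pi_{K'}^{-1}$ of Swan conductor $1$. Both arguments finish identically (the Remark plus case (i) of \autoref{lemma:degreep} over $K'$), and both rest on the same tame-scaling fact $v_{L'}^{\log}(\omega)=e(L'/L)\,v_L^{\log}(\omega)$ for $\omega$ defined over $L$, which the paper uses silently when it asserts $-v_{K'}(a)-p^r<\delta_{L'/K'}(a)/(p-1)$. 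Your route buys two real advantages: it shows the hypothesis $\delta(L/K)\neq 0$ is unambiguous, being equivalent to $\delta_\tor(L/K)\geq 1$ no matter which best $a$ one tests, and it yields the explicit value $\delta_{L'/K'}(\tilde a)=e\,\delta_\tor(L/K)/p$, which is precisely what one wants when the proposition is fed into \autoref{theorem:degreep}, since then $t_0=1/e$ visibly lies below $\delta_\tor(L/K)/(p-1)$; the paper's twisting trick, by contrast, works character by character and never needs the constancy of $\delta$. Two caveats, both at the level of detail the paper itself glosses: the equality $v_K^{\log}(d\tilde a)=v_K(\tilde a)$ and the bestness of $\pi_{K'}^{-1}$ both rely on the fact that, over a perfect residue field, an element of negative valuation is best exactly when its valuation is prime to $p$; and your parenthetical ``equivalently $\delta_\tor(L'/L)=0$'' is loose --- what your scaling step needs is that $\OO_{L'}\otimes_{\OO_L}\hat{\Omega}^1_{\OO_L}(\log)\to\hat{\Omega}^1_{\OO_{L'}}(\log)$ is an isomorphism (true for tame $L'/L$ because the $d\log\pi_{L'}$-coordinate of $d\log\pi_L$ is $e$ plus an element of $\m_{L'}$, hence a unit), which is stronger than torsion-freeness of the cokernel, and $\delta_\tor$ is only defined in the paper when the base has perfect residue field. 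Neither caveat affects correctness.
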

\begin{proof} Take $a\in K$ such that $a$ is best, $v_K(a)<0$, and $\delta_{L/K}(a)\neq 0$. Such $a$ exists since $\delta(L/K)\neq 0$. If 
	 $- v_K(a) <    \dfrac{\delta_{L/K}(a)}{p-1}$, then, by \autoref{lemma:degreep}, it correspond to a $\chi \in H^1(K, \Z/p\Z)$ such that $\sw \chi_L = \sw \chi$. If not, let $e,r \in \Z_{>0}$ be such that $(e,p)=1$ and \[0<-e\left(v_K(a) + \dfrac{\delta_{L/K}(a)}{p-1} \right)<p^r <- e v_K(a).\] 
	 This guarantees that $-ev_K(a)-p^r >0$ and $-ev_K(a)-p^r <  e\dfrac{\delta_{L/K}(a)}{p-1}$. 
	 Let $K'=K(\sqrt[e]{\pi_K})$. It is a totally ramified tame extension with ramification index $e$. Putting $L_0'=L_0K'$ and $L=LK'$, we have that $e(L_0'/K')=1$ and $L'/L_0'$ is a totally ramified cyclic extension of degree $p$. We have $-v_{K'}(a)-p^r>0$ and $-v_{K'}(a)-p^r< \dfrac{\delta_{L'/K'}(a)}{p-1}$.  
	 
	 Let $m=p^r$ and $\tilde{a} = \pi_{K'}^ma$. Observe that  \begin{align*}\delta_{L'/K'}(\tilde{a})&=-v^{\log}_{K'}(\pi_{K'}^mda) +\dfrac{v^{\log}_{L'}(\pi_{K'}^mda)}{e(L'/K')}\\ &= -v^{\log}_{K'}(da)-m +\dfrac{v^{\log}_{L'}(da)}{e(L'/K')} +m \\ &=\delta_{L'/K'}(a).\end{align*}
	 On the other hand, since $p\nmid v_K(a)$, we also have $p\nmid v_{K'}(\tilde{a})$. Then $\tilde{a}$ is best in $K$, and 	 $v_{K'}(\tilde{a}) = m+v_{K'}(a)$. Thus $v_{K'}(\tilde{a})<0$, and \[0<-v_{K'}(\tilde{a})< \dfrac{\delta_{L'/K'}(\tilde{a})}{p-1}.\]  
	 Let $\chi_{K'}\in H^1(K',\Z/p\Z)$ be the character corresponding to $\tilde{a}$ under Artin-Schreier-Witt theory. Then, by \autoref{lemma:degreep},
	 \[\sw \chi_{L'}= \sw \chi_{K'}.\qedhere\] 
\end{proof}

\begin{theorem}\label{theorem:degreep} Assume that $L/L_0$ is a separable, totally ramified cyclic extension of degree $p$. Then
	\[
	\psi^{\ab}_{L/K}(t) = \begin{cases}
		t, & t \leq \dfrac{\delta_{\tor}(L/K)}{p-1} \\ pt - \delta_\tor(L/K), & t> \dfrac{\delta_{\tor}(L/K)}{p-1}
	\end{cases}
	\]
\end{theorem}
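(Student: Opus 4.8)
The plan is to split at the breakpoint $t_0 := \delta_\tor(L/K)/(p-1)$ and treat the two branches by completely different means. Since $e(L_0/K)=1$ and $L/L_0$ is totally ramified of degree $p$, we have $e(L/K)=p$, and the characteristic-$p$ threshold of \autoref{theorem:oldformula} is exactly $\frac{p}{p-1}\cdot\frac{\delta_\tor(L/K)}{e(L/K)}=t_0$. Hence for $t>t_0$ that theorem gives $\psi^\ab_{L/K}(t)=e(L/K)t-\delta_\tor(L/K)=pt-\delta_\tor(L/K)$ with no further work, which is precisely the upper branch. So the entire remaining content is to prove that $\psi^\ab_{L/K}(t)=t$ on $[0,t_0]$.

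For the lower branch I would argue purely from convexity together with a single well-chosen value. By \autoref{theorem:cont}, $\psi^\ab_{L/K}$ is continuous and convex with $\psi^\ab_{L/K}(0)=0$; continuity forces $\psi^\ab_{L/K}(t_0)=\lim_{t\to t_0^+}(pt-\delta_\tor(L/K))=t_0$, so the points $(0,0)$ and $(t_0,t_0)$ lie on the graph. Convexity then yields the chord bound $\psi^\ab_{L/K}(t)\le t$ for all $t\in[0,t_0]$. The key observation is that it now suffices to produce a single interior point $c\in(0,t_0)$ with $\psi^\ab_{L/K}(c)=c$: the three collinear points $(0,0)$, $(c,c)$, $(t_0,t_0)$ together with convexity force equality in the convexity inequality at an interior point, and equality there makes a convex function affine, hence equal to the identity, on all of $[0,t_0]$.

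To exhibit such a $c$ I would invoke \autoref{proposition:degreep}. Since $L/L_0$ is a wildly ramified degree-$p$ extension we have $\delta(L/K)\neq 0$, so the proposition applies and produces a finite tame extension $K'/K$ and a character $\chi_{K'}\in H^1(K',\Z/p\Z)$ with $\sw\chi_{L'}=\sw\chi_{K'}>0$, where $L'=LK'$; moreover its construction has enough free parameters that $c:=\sw\chi_{K'}/e(K'/K)$ can be taken arbitrarily small, in particular with $0<c<t_0$. Because $K'/K$ is tame we have $e(L'/L)=e(K'/K)$, so the equality $\sw\chi_{L'}=\sw\chi_{K'}=e(K'/K)c$ says that the image of $\chi_{K'}$ is not contained in $F_{e(L'/L)s}H^1(L')$ for any $s<c$; reading this back through the defining formula for $\psi^\ab_{L/K}$ gives $\psi^\ab_{L/K}(c)\ge c$. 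Combined with the chord bound $\psi^\ab_{L/K}(c)\le c$, we obtain $\psi^\ab_{L/K}(c)=c$, which completes the reduction and hence the proof.

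The hard part is the lower bound of the third paragraph: everything rests on translating the equality $\sw\chi_{L'}=\sw\chi_{K'}$ of \autoref{proposition:degreep} — itself the content of \autoref{lemma:degreep} in the regime $-v_L^{\log}(da)<p\,\delta(a)/(p-1)$ — back into a value of $\psi^\ab_{L/K}$ via its definition, while keeping careful track of the tame normalization factors $e(K'/K)=e(L'/L)$ and of the fact that this single character already forces the infimum in the definition to be at least $c$. The convexity reduction to one interior point is what makes this feasible, since it converts what might otherwise require a dense family of ``Swan-conductor-preserving'' characters into the purely existential statement already furnished by \autoref{proposition:degreep}.
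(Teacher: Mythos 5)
Your proposal is correct and takes essentially the same route as the paper's own proof: the upper branch from \autoref{theorem:oldformula} (noting $e(L/K)=p$) together with continuity from \autoref{theorem:cont}, the chord bound $\psi^\ab_{L/K}(t)\leq t$ on $[0,t_0]$ from convexity, reduction to exhibiting a single interior point with $\psi^\ab_{L/K}(c)\geq c$, and production of that point from \autoref{proposition:degreep} via the definition of $\psi^\ab_{L/K}$. The only cosmetic differences are that you phrase the convexity reduction via three collinear points where the paper argues by contradiction, and that you secure $0<c<t_0$ by tuning the free parameters in the construction where the paper relies on the stated bound $0<\sw\chi_{K'}<\delta_{L'/K'}(\tilde{a})/(p-1)$; these are equivalent.
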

\begin{proof} From \autoref{theorem:oldformula} and the continuity of $\psi_{L/K}^{\ab}$ (\autoref{theorem:cont}), we have that 
	$\psi_{L/K}^{\ab}(t)=  pt - \delta_\tor(L/K)$ for $t\geq \dfrac{\delta_\tor(L/K)}{p-1}$. Observe also that $\psi_{L/K}^{\ab}(0)=0$, and 
	\[
		\psi_{L/K}^{\ab}\left(\frac{\delta_\tor(L/K)}{p-1}\right) = p \frac{\delta_\tor(L/K)}{p-1} - \delta_\tor(L/K) = \frac{\delta_\tor(L/K)}{p-1}.
	\]
	Since $\psi^{\ab}_{L/K}$ is convex (\autoref{theorem:cont}), it follows that  $0\leq\psi_{L/K}^{\ab}(t)\leq t$ for $0\leq t \leq \frac{\delta_\tor(L/K)}{p-1}$. It suffices to show that $\psi_{L/K}^{\ab}(t_0)\geq t_0$ for some 
	$0< t_0 < \frac{\delta_\tor(L/K)}{p-1}$. Indeed, if $\psi_{L/K}^{\ab}(t_0)\geq t_0$, then $\psi_{L/K}^{\ab}(t_0)= t_0$. If $0<\tilde{t}< t_0$ is such that $\psi_{L/K}^{\ab}(\tilde{t})<\tilde{t}$, then, by convexity, $\psi_{L/K}^{\ab}(t)< t$ for $\tilde{t}<t < \frac{\delta_\tor(L/K)}{p-1}$. This contradicts $\psi_{L/K}^{\ab}(t_0)\geq t_0$. A similar argument can be made to show that $\psi_{L/K}^{\ab}(\tilde{t})\geq\tilde{t}$ for $t_0<\tilde{t}< \frac{\delta_\tor(L/K)}{p-1}$.

	This shows that it is  only necessary to prove that $\psi_{L/K}^{\ab}(t_0)\geq t_0$ for some  $0< t_0 < \frac{\delta_\tor(L/K)}{p-1}$, which follows from \autoref{proposition:degreep} and the definition of $\psi_{L/K}^{\ab}$.
 \end{proof}

\begin{ack}
	I would like to deeply thank Professor Kazuya Kato for kindly sharing his helpful feedback and advice throughout the development of this paper.    
\end{ack}

\bibliographystyle{amsplain}
\bibliography{bibfile}

\end{document}